\documentclass[12pt,reqno]{amsart}

\usepackage{graphics,cancel,graphicx}
\usepackage{epsfig,psfrag,manfnt}
\usepackage{bbm,subfigure,rotating,bm,float,mathdots,wasysym,scalerel}

\usepackage[all,cmtip]{xy}
\usepackage{amssymb,amsmath,amsfonts,amsthm,color}

%===
\newenvironment{smallarray}[1]
 {\null\,\vcenter\bgroup\scriptsize
  \arraycolsep=.13885em
  \hbox\bgroup$\array{@{}#1@{}}}
 {\endarray$\egroup\egroup\,\null}
%===
\usepackage{setspace}
\setstretch{1.15}
%===
\usepackage[new]{old-arrows}
%===
\usepackage{stackengine}
\stackMath
\newcommand\reallywidehat[1]{%
\savestack{\tmpbox}{\stretchto{%
  \scaleto{%
    \scalerel*[\widthof{\ensuremath{#1}}]{\kern-.6pt\bigwedge\kern-.6pt}%
    {\rule[-\textheight/2]{1ex}{\textheight}}%WIDTH-LIMITED BIG WEDGE
  }{\textheight}% 
}{0.5ex}}%
\stackon[1pt]{#1}{\tmpbox}%
}
\parskip 1ex
%===

\newcommand{\calA}{\mathcal{A}}

\newcommand{\calO}{\mathcal{O}}

\newcommand{\mC}{\mathbb{C}}
\newcommand{\mD}{\mathbb{D}}

\newcommand{\mF}{\mathbb{F}}

\newcommand{\mN}{\mathbb{N}}

\newcommand{\mR}{\mathbb{R}}

\newcommand{\mZ}{\mathbb{Z}}

\newcommand{\bbe}{\bm{e}}

\newcommand{\bbk}{\bm{k}}

\newcommand{\bbp}{\bm{p}}

\newcommand{\bbv}{\bm{v}}

\newcommand{\SL}{{\textrm{SL}}}
\newcommand{\eSL}{{\textrm{\em SL}}}
\newcommand{\E}{{\textrm{E}}}
\newcommand{\eE}{{\textrm{\em E}}}
\newcommand{\GL}{{\textrm{GL}}}
\newcommand{\eGL}{{\textrm{\em GL}}}

\newtheorem{theorem}{Theorem}[section]
\newtheorem{lemma}[theorem]{Lemma}
\newtheorem{corollary}[theorem]{Corollary}
\newtheorem{proposition}[theorem]{Proposition}

\theoremstyle{definition}

\newcommand{\nm}{\,\rule[-.6ex]{.13em}{2.3ex}\,}
\theoremstyle{definition}
\newtheorem{definition}[theorem]{Definition}

\theoremstyle{definition}

\theoremstyle{definition}
\newtheorem{example}[theorem]{Example}

\begin{document}

\keywords{Banach algebras, \v{C}ech cohomology, entire functions, $K$-theory, stable ranks, Krull dimension, Hadamard product, corona problem.}

\subjclass[2010]{Primary 46J15; Secondary 13A15, 15A24, 13J99}

 \title[]{On a Banach algebra of entire functions with a weighted Hadamard multiplication}
 
 \author[]{Amol Sasane}
 \address{Department of Mathematics \\London School of Economics\\
     Houghton Street\\ London WC2A 2AE\\ United Kingdom}
 \email{A.J.Sasane@lse.ac.uk}
 
 \maketitle
 
 \begin{abstract} 
 New algebraic-analytic properties of a previously studied Banach algebra $\calA(\bbp)$ of entire functions are established. For a given fixed sequence $(\bbp(n))_{n\geq 0}$ of positive real numbers, such that $\lim_{n\rightarrow \infty} \bbp(n)^{\frac{1}{n}}=\infty$, the Banach algebra $\calA(\bbp)$ is the set of all entire functions $f$ such that $f(z)=\sum_{n=0}^\infty \widehat{f}(n) z^n $ ($z\in \mC$), where the sequence $(\widehat{f}(n))_{n\geq 0}$ of Taylor coefficients of $f$ satisfies $\widehat{f}(n)=O(\bbp(n)^{-1})$ for $n\rightarrow \infty$, with pointwise addition and scalar multiplication, a weighted Hadamard multiplication $\ast$ with weight given by $\bbp$ (i.e., $(f\ast g)(z)=\sum_{n=0}^\infty \bbp(n) \widehat{f}(n)\widehat{g}(n)z^n$ for all $z\in \mC$), and the norm $\|f\|=\sup_{n\geq 0} \bbp(n)|\widehat{f}(n)|$. The following results are shown: 
 \begin{itemize}
 \item The Topological stable rank of $\calA(\bbp)$ is $1$. 
 \item The Bass stable rank of $\calA(\bbp)$ is $1$. 
 \item $\calA(\bbp)$ is a Hermite ring. 
 \item $\calA(\bbp)$ is not a projective-free ring. 
 \item Idempotents in $\calA(\bbp)$ are described. 
 \item Exponentials in $\calA(\bbp)$ are described, and it is shown that every invertible element of $\calA(\bbp)$ has a logarithm, so that the first \v{C}ech cohomology group $H^1(M(\calA(\bbp)),\mZ)$ with integer coefficients of the maximal ideal space $M(\calA(\bbp))$ is trivial. 
 \item A generalised necessary and sufficient `corona-type condition' on the matricial
data $(A,b)$ with entries from $\calA(\bbp)$ is given for the solvability of $Ax = b$ with $x$ also having entries from $\calA(\bbp)$.
 \item The Krull dimension of $\calA(\bbp)$ is infinite. 
 \item $\calA(\bbp)$ is neither Artinian nor Noetherian.
 \item $\calA(\bbp)$ is coherent.
 \item The special linear group over $\calA(\bbp)$ is generated by elementary matrices. 
 \end{itemize}
 \end{abstract}
 
 \section{Introduction}
 
\noindent  In \cite{vRen}, the following Banach algebras were introduced. 
Throughout the article, we will use the notation $\mN$ for the set $\{1,2,3,\cdots\}$ of natural numbers, and $\mN_0:=\mN\cup \{0\}$.
 
 \begin{definition}[The Banach algebra $\calA(\bbp)$]$\;$
 \phantom{Let $\bbp:\mN_0\rightarrow (0,\infty)$ 
 be }
Let $\bbp:\mN_0\rightarrow (0,\infty)$ 
 be such that 
 $$
 \displaystyle 
 \lim_{n\rightarrow \infty} (\bbp(n))^{\frac{1}{n}}=\infty.
 $$ 
 Define 
 $$
 \calA(\bbp)=
 \Big\{ f\!:\!\mC\!\rightarrow\! \mC\;\!\Big|\;\! f(z)\!=\sum_{n=0}^\infty a_n z^n\;
 (z\!\in\! \mC), \;
 a_n\!=\!O\Big(\frac{1}{\bbp(n)}\Big) \textrm{ for } n\!\rightarrow \!\infty\Big\}.
 $$
 The $O$-notation here means as usual that there exists a constant $C>0$ such that $\bbp(n)|a_n|<C$ for all $n\in \mN_0$. 
 For $f\in \calA(\bbp)$, we set 
 $$
 \widehat{f}(n):=\frac{1}{n!}\frac{d^nf}{dz^n}(0)\quad (n\in \mN_0).
 $$
 With pointwise addition and scalar multiplication, $\calA(\bbp)$ is a complex vector space. 
 We equip $\calA(\bbp)$ with the weighted Hadamard multiplication $\ast$, given by 
 $$
( f\ast g)(z)=\sum_{n=0}^\infty  \bbp(n) \widehat{f}(n) \widehat{g}(n) z^n \;\;(z\in \mC), 
\textrm{ for all }f,g\in \calA(\bbp),
$$
and the norm $\|\cdot\|$, defined by 
$$
\|f\|=\sup_{n\in \mN_0} \bbp(n) |\widehat{f}(n)|  \textrm{ for all }f\in \calA(\bbp).
$$
 \end{definition}
 
 \noindent Then $\calA(\bbp)$ is a complex commutative unital Banach algebra with the unit element $\varepsilon$ given by 
\begin{equation}
\label{identity_element}
\varepsilon(z)={\scaleobj{0.93}{\sum_{n=0}^\infty}} \;\!\frac{1}{\bbp(n)} z^n\quad (z\in \mC).
\end{equation}
For example, if ${\bbp}_{\ast}(n)=n!$ ($n\in \mN_0$), then $\lim_{n\rightarrow\infty} (n!)^{\frac{1}{n}}=\infty$, and the corresponding Banach algebra $\calA({\bbp}_{\ast})$ has the identity element $\exp z$. This Banach algebra $\calA({\bbp}_{\ast})$ was introduced and studied in \cite{Sen}. 
%, and subsequently also in \cite{BosDas}. In the latter article,  the ideal structure of $\calA(\bbp_*)$ was studied, and one of the main results was a characterisation of the fixed maximal ideals (Theorem~1). Unfortunately, \cite[Theorem~1]{BosDas} was shown to be false\footnote{$I(\alpha):=\{f\in \calA(\bbp_*):f(\alpha)=0\}$ defined in \cite[Theorem~1]{BosDas} is not an ideal for $\alpha \in \mC\setminus\{0\}$.} in \cite{vRen}, and this was the motivation for the article
 In \cite{vRen},  the more general Banach algebras $\calA(\bbp)$ were introduced, their ideal structure was studied, and the following results were shown. \begin{itemize}
 \item[(R1)]
 \label{3320231845} $g\in \calA(\bbp)$ is a divisor of $f\in \calA(\bbp)$ if and only if there exists a constant $C>0$ such that 
 $|\widehat{f}(n)|\leq C|\widehat{g}(n)|$ for all $n\in \mN_0$. 
 
 \noindent In particular, $f$ is invertible in $\calA(\bbp)$ if and only if there exists a $\delta>0$ such that  
  $
 |\widehat{f}(n)|\geq \frac{\delta}{\bbp(n)}$ for all $n\in \mN_0$.
  \item[(R2)] Every finite collection of functions $f_1,\cdots, f_K\in \calA(\bbp)$ ($K\in \mN$) has a greatest 
 common divisor $d\in\calA(\bbp)$. Up to invertible elements, $d$ is given by 
  $
 \widehat{d}(n)=\max_{1\leq k\leq K} |\widehat{f_k}(n)|$ for all $n\in \mN_0$.
 \item[(R3)] \label{5_3_2023_1335}For $f,f_1,\cdots, f_K\in \calA(\bbp)$, $f$ belongs to the ideal $\langle f_1,\cdots, f_K\rangle 
 $ in $\calA(\bbp)$ generated by $f_1,\cdots, f_K$ if and only if there exists a constant $C\!>\!0$ such that 
  $
| \widehat{f}(n)|\!\leq\! C \!\sum_{k=1}^K |\widehat{f_k}(n)|$ for all $n\!\in\! \mN_0$.
 \item[(R4)] \label{3320231651} Every finitely generated ideal in $\calA(\bbp)$ is principal. 
 \item[(R5)] \label{11_3_2023_1509}An ideal $I$ of $\calA(\bbp)$ is  {\em fixed} if there exists an $m\in \mN_0$ such that for all $f\in I$, $\widehat{f}(m)=0$. Then $I_m:=\{f\in \calA(\bbp): \widehat{f}(m)=0\}$ is a fixed, maximal ideal of $\calA(\bbp)$. Every fixed, maximal ideal of $\calA(\bbp)$ is $I_m$ for some $m\in \mN_0$. 
 \end{itemize}
 
 \vspace{-0.3cm}
 
 \noindent In this article, we will show the following:
 
  \vspace{-0.3cm}
 
 \begin{enumerate}
 \item The topological stable rank of $\calA(\bbp)$ is equal to $1$.
 \item The Bass stable rank of $\calA(\bbp)$ is equal to $1$. 
 \item $\calA(\bbp)$ is a Hermite ring.
 \item $\calA(\bbp)$ is not a projective-free ring.
  \item Idempotents in $\calA(\bbp)$ are described. 
  \item Exponentials in $\calA(\bbp)$ are described, and it is shown that every invertible element of $\calA(\bbp)$ has a logarithm, so that the first \v{C}ech cohomology group $H^1(M(\calA(\bbp)),\mZ)$ with integer coefficients of the maximal ideal space $M(\calA(\bbp))$ is trivial. 
  \item A generalised necessary and sufficient `corona-type condition' on the matricial
data $(A,b)$ with entries from $\calA(\bbp)$ is given for the solvability of $Ax = b$ with $x$ also having entries from $\calA(\bbp)$.
 \item The Krull dimension of $\calA(\bbp)$ is infinite. 
 \item $\calA(\bbp)$ is neither Artinian nor Noetherian.
 \item $\calA(\bbp)$ is coherent.
 \item The special linear group over $\calA(\bbp)$ is generated by elementary matrices.
 \end{enumerate}
 
 \vspace{-0.3cm}
 
 \noindent 
 The outline of this article is as follows:  In each section, we will first give the background of the property, by recalling key definitions, and then prove the property, possibly with additional commentary.

 \section{Topological stable rank of $\calA(\bbp)$ is $1$} 
 
 \noindent The notion of the topological stable rank for topological algebras was introduced by Rieffel in \cite{Rie} analogous to the $K$-theoretic concept of (Bass) stable rank, as well as several related numerical invariants.
  
\begin{definition}
Let $R$  be a commutative unital ring with identity element $1$. 
 We assume that $1\neq 0$, that is, $R$ is not the trivial ring $\{0\}$. 
 For $n\in \mN$, an $n$-tuple $(a_1,\cdots ,a_n) \in  R^n=R\times \cdots\times R$ ($n$ times) is said to be {\em invertible} 
(or {\em unimodular}), if there exists $(b_1 , \cdots , b_n) \in R^n$ such that the B\'ezout equation
$$
b_1a_1+\cdots +b_na_n =1
$$ 
is satisfied. The set of all invertible $n$-tuples is denoted by $U_n(R)$.

Now suppose that $A$ is a commutative unital Banach algebra. 
The {\em topological stable rank} of $A$ is the minimum $n\in \mN$ such that $U_n(A)$ is dense in $A^n$,  and it is infinite if no such $n$ exists.
\end{definition}

\noindent Here, $A^{n}$  is the normed space with the `Euclidean norm' $\|\cdot\|_2$ given by  
\begin{equation}
\label{2_3_2023_1449}
 \|\bbv\|_2^2 :=\|v_1\|^2+\cdots+\|v_n\|^2
 \end{equation}
 for all $\bbv$ in $A^{n}$, where 
  $\bbv$ has the components $v_1,\dots,v_n\in A$.
 
 \begin{theorem}
 \label{theorem_tsr_Ap}
 The topological stable rank of $\calA(\bbp)$ is $1$.
 \end{theorem}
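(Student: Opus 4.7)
The plan is to show that the set of invertible elements is dense in $\calA(\bbp)$ by explicitly perturbing the Taylor coefficients of an arbitrary $f\in\calA(\bbp)$ to push them away from $0$ (relative to the weight $\bbp$) without moving $f$ far in norm. The key enabling ingredient is result (R1), which turns invertibility into the pointwise lower bound $|\widehat{f}(n)|\geq \delta/\bbp(n)$ for some $\delta>0$.

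Let $f\in\calA(\bbp)$ and $\epsilon>0$ be given. First I would set $\delta:=\epsilon/2$ and define a new sequence $(c_n)_{n\geq 0}$ by
\[
c_n:=\begin{cases} \widehat{f}(n) & \text{if } |\widehat{f}(n)|\geq \delta/\bbp(n),\\[2pt] \delta/\bbp(n) & \text{if } |\widehat{f}(n)|<\delta/\bbp(n).\end{cases}
\]
Second, I would verify that $g(z):=\sum_{n=0}^{\infty} c_n z^n$ actually defines an element of $\calA(\bbp)$: since $\bbp(n)|c_n|\leq \max\{\bbp(n)|\widehat{f}(n)|,\delta\}\leq \|f\|+\delta$, the sequence $(c_n)$ satisfies $c_n=O(\bbp(n)^{-1})$, and the hypothesis $\bbp(n)^{1/n}\to\infty$ forces $\limsup |c_n|^{1/n}=0$, so $g$ is entire and $\widehat{g}(n)=c_n$. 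Third, I would estimate: in the first case $\bbp(n)|\widehat{g}(n)-\widehat{f}(n)|=0$, and in the second case the triangle inequality gives $\bbp(n)|\widehat{g}(n)-\widehat{f}(n)|\leq \bbp(n)(\delta/\bbp(n)+\delta/\bbp(n))=2\delta=\epsilon$. Hence $\|f-g\|\leq\epsilon$. Finally, $|\widehat{g}(n)|\geq \delta/\bbp(n)$ for every $n\in\mN_0$, so by the invertibility criterion in (R1), $g$ is invertible in $\calA(\bbp)$.

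Since $\epsilon>0$ was arbitrary, $U_1(\calA(\bbp))$ is dense in $\calA(\bbp)$, which means the topological stable rank of $\calA(\bbp)$ is at most $1$. Because we assume $1\neq 0$ in the algebra (i.e. $\calA(\bbp)$ is nontrivial), the topological stable rank is at least $1$, giving equality. There is no serious obstacle here: the entire argument rides on the fact that the norm controls Taylor coefficients one at a time, so a coefficient-wise perturbation of size $O(\epsilon/\bbp(n))$ is automatically an $\epsilon$-perturbation in the algebra, and the growth condition $\bbp(n)^{1/n}\to\infty$ ensures that any such perturbation stays inside $\calA(\bbp)$.
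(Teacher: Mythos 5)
Your proof is correct and follows essentially the same approach as the paper: perturb each Taylor coefficient that is too small up to a fixed multiple of $1/\bbp(n)$, observe that this is a small perturbation in the norm of $\calA(\bbp)$, and invoke (R1) to conclude the perturbed element is invertible. The only differences are cosmetic (you set $\delta=\epsilon/2$ to land exactly at $\|f-g\|\leq\epsilon$ instead of $2\epsilon$, and you spell out the routine check that $g$ is entire).
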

 \begin{proof} Let $f\in \calA(\bbp)$, and $\epsilon>0$. 
Define $g$ by 
$$
\widehat{g}(n)=\left\{\begin{array}{cl} 
\widehat{f}(n) & \textrm{if } \bbp(n) |\widehat{f}(n)|>\epsilon ,\\[0.1cm] 
\frac{\epsilon}{\bbp(n)} & \textrm{if } \bbp(n) |\widehat{f}(n)|\leq \epsilon. 
\end{array}\right.
$$
Then 
$$
|\widehat{g}(n)-\widehat{f}(n)|
\left\{\begin{array}{ll} 
= 0 & \textrm{if } \bbp(n) |\widehat{f}(n)|>\epsilon \\[0.1cm]
\leq \frac{2\epsilon}{\bbp(n)} & \textrm{if } \bbp(n) |\widehat{f}(n)|\leq \epsilon
\end{array}\right.
$$
and so $g\in \calA(\bbp)$ and $\|g-f\|\leq 2\epsilon$. Also, 
$$
|\widehat{g}(n)|=\left\{\begin{array}{cl} 
|\widehat{f}(n)| & \textrm{if } \bbp(n) |\widehat{f}(n)|>\epsilon \\[0.1cm]
\frac{\epsilon}{\bbp(n)} & \textrm{if } \bbp(n) |\widehat{f}(n)|\leq \epsilon. 
\end{array}\right\}\geq \frac{\epsilon}{\bbp(n)} \quad (n\in \mN_0).
$$
Consequently, $g$ is invertible in $\calA(\bbp)$ by (R1) on page~\pageref{3320231845}. 
 \end{proof}

\section{Bass stable rank of $\calA(\bbp)$ is $1$}
 
 \noindent The notion of the Bass stable rank 
was introduced in algebraic $K$-theory by Bass \cite{Bas}, but it has since proved  useful in other branches of mathematics as well, for example, in analysis (see e.g. \cite{MorRup}). 
 
 \begin{definition}
 Let $R$ be a commutative unital ring with identity $1$. 
 Let $n\in \mN$. An $(n + 1)$-tuple $(a_1,\cdots ,a_{n+1},\alpha) \in  U_{n+1}(R)$ is  said to be {\em reducible} if
there exists an $n$-tuple $(h_1,\cdots, h_n) \in  R^n$ such that
$(a_1 + h_1\alpha ,\cdots ,a_n + h_n\alpha) \in  U_n(R)$.  The {\em Bass stable rank of $R$} is the smallest integer $n$ such that every element in $U_{n+1}(R)$ is reducible. It is infinite if no such $n$ exists.
 \end{definition}
 
 \vspace{-0.3cm}
 
 \noindent For a commutative unital Banach algebra, the Bass stable rank is at most equal to the 
 topological stable rank \cite[Theorem~2.3]{Rie}. By Theorem~\ref{theorem_tsr_Ap}, it follows immediately 
 that the Bass stable rank of $\calA(\bbp)$ is $1$. Nevertheless, we include a direct proof below. 
 
 \begin{theorem}
 \label{theorem_bsr_Ap}
 The Bass stable rank of $\calA(\bbp)$ is $1$.
 \end{theorem}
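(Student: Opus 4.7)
The plan is to prove $\mathrm{bsr}(\calA(\bbp)) \leq 1$ directly, by showing every unimodular pair $(a,b) \in U_2(\calA(\bbp))$ is reducible: that is, exhibit $h \in \calA(\bbp)$ with $a + h*b$ invertible. The strategy mirrors the proof of Theorem~\ref{theorem_tsr_Ap} and leverages the coefficient-wise characterizations (R1) of invertibility and (R3) of ideal membership from \cite{vRen} to reduce everything to a pointwise construction on Taylor coefficients.

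First I would encode the unimodularity of $(a,b)$ via (R3): since $\varepsilon \in \langle a, b \rangle$ and $\widehat{\varepsilon}(n) = 1/\bbp(n)$, there exists $C > 0$ with
\[
\frac{1}{\bbp(n)} \;\leq\; C\bigl(|\widehat{a}(n)| + |\widehat{b}(n)|\bigr) \qquad (n \in \mN_0).
\]
I would then fix a threshold $\epsilon \in \bigl(0,\, \min(1/(2C),\,1/2)\bigr)$ and define $\widehat{h}(n)$ piecewise: set $\widehat{h}(n) = 0$ whenever $|\widehat{a}(n)| > \epsilon/\bbp(n)$, and set $\widehat{h}(n) = 1/(\bbp(n)^2\, \widehat{b}(n))$ otherwise. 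On the second set, the unimodularity inequality forces $|\widehat{b}(n)| \geq (1/C - \epsilon)/\bbp(n) > 0$, so $\widehat{h}(n)$ is well-defined and obeys $|\widehat{h}(n)| \leq \text{const}/\bbp(n)$; consequently $h \in \calA(\bbp)$.

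Next I would verify invertibility of $a + h*b$ at the Taylor-coefficient level. From the weighted Hadamard rule,
\[
\widehat{(a + h*b)}(n) \;=\; \widehat{a}(n) + \bbp(n)\,\widehat{h}(n)\,\widehat{b}(n),
\]
which equals $\widehat{a}(n)$ on the first set (of absolute value exceeding $\epsilon/\bbp(n)$) and $\widehat{a}(n) + 1/\bbp(n)$ on the second (of absolute value at least $(1-\epsilon)/\bbp(n)$ by the triangle inequality). Setting $\delta := \min(\epsilon,\,1-\epsilon)$ gives the uniform lower bound $|\widehat{(a+h*b)}(n)| \geq \delta/\bbp(n)$, so (R1) yields invertibility of $a + h*b$. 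Since the Bass stable rank of any nontrivial unital ring is at least $1$, equality follows.

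The only delicate point is calibrating $\epsilon$: it must be small enough that (i) $\widehat{b}(n)$ is bounded below wherever we invert it, keeping $h \in \calA(\bbp)$, and (ii) the triangle inequality $|\widehat{a}(n) + 1/\bbp(n)| \geq (1-\epsilon)/\bbp(n)$ leaves a strictly positive constant. A single $\epsilon \in \bigl(0,\,\min(1/(2C),\,1/2)\bigr)$ serves both purposes, so no genuine obstacle arises beyond bookkeeping.
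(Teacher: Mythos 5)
Your proof is correct, and it takes a genuinely different — and arguably cleaner — route than the paper's. The paper proceeds by normalizing: it introduces an invertible $u$ with $\widehat{u}(n)=\frac{1}{\bbp(n)}+|\widehat{f_1}(n)|$, forms $F_1=f_1\ast u^{-1}$ with $\|F_1\|\le 1$ and the compensating $G_1=g_1\ast u$, approximates $G_1$ by an invertible $H_1$ with $\|H_1-G_1\|\le\tfrac12$, and then exhibits $f_1+h\ast f_2$ (with $h=H_1^{-1}\ast u\ast g_2$) as a product of the invertible $H_1^{-1}\ast u$ and the near-identity element $\varepsilon+(H_1-G_1)\ast F_1$. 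Your argument skips all of that machinery: it encodes the B\'ezout relation via the coefficient inequality from (R3), partitions $\mN_0$ into indices where $|\widehat{a}(n)|$ already exceeds $\epsilon/\bbp(n)$ (set $\widehat{h}(n)=0$) and indices where the unimodularity inequality forces $|\widehat{b}(n)|\ge (1/C-\epsilon)/\bbp(n)$ (set $\widehat{h}(n)=1/(\bbp(n)^2\widehat{b}(n))$ so that $\bbp(n)\widehat{h}(n)\widehat{b}(n)=1/\bbp(n)$), and then reads off the uniform lower bound $|\widehat{a}(n)+\bbp(n)\widehat{h}(n)\widehat{b}(n)|\ge \min(\epsilon,1-\epsilon)/\bbp(n)$, invoking (R1). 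The calibration of $\epsilon$ and the verification that $h\in\calA(\bbp)$ are all in order. What the paper's method buys is a more `soft' argument that stays close to the topological-stable-rank proof and to standard Banach-algebra perturbation ideas (small perturbations of $\varepsilon$ are invertible); what your method buys is a direct, index-by-index construction of the reducing element that exploits the explicit coefficient characterizations (R1) and (R3) head-on, with no auxiliary normalization or approximation step. Both are valid proofs.
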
 
 
 \vspace{-0.45cm}
 
 \begin{proof} Let $f_1,f_2,g_1,g_2\in \calA(\bbp)$ be such that 
 $g_1 \ast f_1+g_2 \ast f_2=\varepsilon$.  
 Define $u$ by 
  $$
  \begin{array}{rcl}
 \widehat{u}(n)\!\!\!&=&\!\!\!\frac{1}{\bbp(n)}+|\widehat{f_1}(n)|\quad (n\in \mN_0).
 \end{array}
 $$ 
 Then $|\widehat{u}(n)|=O(\frac{1}{\bbp(n)})$ as $n\rightarrow \infty$, and so $u\in \calA(\bbp)$.  Moreover, we have 
 $|\widehat{u}(n)|\geq \frac{1}{\bbp(n)}$ ($n\in \mN_0$), 
  and so $u$ is invertible in $\calA(\bbp)$ by (R1) (p.~\pageref{3320231845}). 
  Then $\widehat{u^{-1}}(n)=\frac{\widehat{u}(n)}{(\bbp(n))^2}$ ($n\in \mN_0$). Define $F_1=f_1 \ast u^{-1}\in \calA(\bbp)$. Thus
  $$
   \begin{array}{rcl}
  \widehat{F_1}(n)
  \!\!\!&=&\!\!\!\bbp(n)  \widehat{f_1}(n)  \frac{1}{(\bbp(n))^2}\frac{1}{\frac{1}{\bbp(n)}+|\widehat{f_1}(n)|} 
  =\frac{1}{\bbp(n)} \frac{\bbp(n)  \widehat{f_1}(n)}{(1+\bbp(n)| \widehat{f_1}(n)|)}.
  \end{array}
$$
We have $\|F\|\leq 1$ because the above yields $
|\widehat{F_1}(n)|\leq \frac{1}{\bbp(n)}\cdot 1$ ($n\in \mN_0$). 
 Define $G_1=g_1\ast u\in \calA(\bbp)$. Then 
$G_1 \ast F_1=g_1\ast u\ast f_1\ast u^{-1}=g_1 \ast f_1$, and so 
 $
\varepsilon= g_1 \ast f_1+g_2 \ast f_2=G_1  \ast F_1+ g_2 \ast f_2.
$ 
For $\epsilon:=\frac{1}{4}>0$, define $H_1$ by 
$$
\widehat{H_1}(n)=\left\{ \begin{array}{cl} 
 \frac{\epsilon}{\bbp(n)} &\textrm{if } |\widehat{G_1}(n)|\leq \frac{\epsilon}{\bbp(n)},\\[0.1cm]
\widehat{G_1}(n)  &\textrm{if } |\widehat{G_1}(n)|> \frac{\epsilon}{\bbp(n)}.
\end{array}\right.
$$
Then $H_1\in \calA(\bbp)$ since 
$$
\begin{array}{rcl}
|\widehat{H_1}(n)|
\!\!\!&\leq&\!\!\! \max\{ \frac{\epsilon}{\bbp(n)}, |\widehat{G_1}(n)| \} 
\leq  \max\{ \frac{\epsilon}{\bbp(n)},\frac{\|G_1\|}{\bbp(n)}\} 
\leq \frac{\|G_1\|+\epsilon}{\bbp(n)}=:\frac{C}{\bbp(n)}.
\end{array}
$$
The element $H_1$ `approximates' $G_1$, since for all $n\in \mN_0$, we have 
$|\widehat{H_1}(n)-\widehat{G_1}(n)|\leq \frac{2\epsilon}{\bbp(n)}$, 
so that $\| H_1-G_1\|\leq 2\epsilon=\frac{1}{2}$. Furthermore, $H_1$ is invertible by (R1), since $
|\widehat{H_1}(n)|\geq \frac{\epsilon}{\bbp(n)}$ ($n\in \mN_0$). 
 We have that 
 $
H_1 \ast F_1 +g_2 \ast f_2=H_1 \ast F_1 +(\varepsilon -G_1  \ast F_1)=\varepsilon +(H_1-G_1)\ast F_1,
$ 
which is invertible in $\calA(\bbp)$, as 
 $
\|(H_1-G_1)\ast F_1\|\leq \|H_1-G_1\|\|F_1\|\leq \frac{1}{2}\cdot 1<1
$ 
(see e.g. \cite[Thm. 10.7]{Rud}). 
Setting $h=H_1^{-1}\ast u\ast g_2$, we have 
\begin{eqnarray*}
f_1+h\ast f_2
\!\!\!\!&=&\!\!\!\! 
H_1^{-1} \ast u \ast (H_1 \ast u^{-1} \ast f_1  + g_2 \ast f_2) \\
\!\!\!\!&=&\!\!\!\! 
H_1^{-1} \ast u \ast (H_1 \ast F_1 +g_2 \ast f_2)
\\
\!\!\!\!&=&\!\!\!\! 
 H_1^{-1} \ast u \ast (\varepsilon +(H_1-G_1)\ast F_1) ,
 \end{eqnarray*}
 which is invertible in $\calA(\bbp)$, as it is the product of invertible elements from $\calA(\bbp)$. 
\end{proof}

\vspace{-0.51cm}

 \section{$\calA(\bbp)$ is a Hermite ring, but not a projective free ring}
 
 \noindent The study of Serre's conjecture and algebraic $K$-theory naturally led to the notion of Hermite rings; see e.g., \cite{Lam}.
 
 \begin{definition}
 Let $R$ be a commutative unital ring. The ring $R$ is called 
{\em Hermite} if every finitely generated stably free $R$-module is free. The ring $R$ is  called {\em projective-free} if every 
 finitely generated projective $R$-module is free. 
\end{definition}

\vspace{-0.3cm}

 \noindent If $R$-modules $M,N$ are isomorphic, then we write $M\cong N$.  
 Recall that if $M$ is a finitely generated
  $R$-module, then:

\noindent $\bullet$  $M$ is called {\em free} if $M \cong R^k$ for some integer $k\geq 0$.

\noindent $\bullet$ $M$ is called {\em projective} if there exists an $R$-module $N$
  and an integer $\phantom{\bullet}$ $ m\geq 0$ such that $M\oplus  N \cong R^m$.
  
\noindent $\bullet$ $M$  is called {\em stably free}  if there exist  free  finitely generated $R$-modules $\phantom{\bullet }$ $F$ and $G$  such that $M \oplus F \cong G$.

\noindent It is clear that every projective free ring is Hermite. 

 For $m,n\in \mN$, $R^{m\times n}$ denotes the set of matrices with $m$ rows and $n$ columns having entries from $R$. The identity element in $R^{k\times k}$ having diagonal elements $1$,  and zeroes elsewhere will be denoted by $I_k$.  

In terms of matrices, $R$ is Hermite if and only if left-invertible tall matrices over $R$ can be completed to invertible ones (see e.g.  \cite[p.VIII]{Lam}, \cite[p.1029]{Tol}): 
For all $k, K\in \mN$ such that $k<K$, and for all $f\in R^{K\times k}$ 
such that there exists a $g\in R^{k\times K}$ so that $gf=I_k$, 
there exists an $f_c\in R^{K\times (K-k)}$ and there  exists a $G\in R^{K\times K}$ 
such that $G[\begin{smallmatrix}f & f_c\end{smallmatrix}]=I_K$.  
 
 In terms of matrices (see e.g. \cite[Proposition~2.6]{Coh} or
\cite[Lemma~2.2]{BRS}), the ring $R$ is projective-free if and
only if every idempotent matrix $P$ is conjugate (by an invertible
matrix $S$) to a diagonal matrix with elements $1$ and $0$ on the diagonal,
that is, for every $m\in \mN$ and every $P\in R^{m\times m}$
satisfying $P^2=P$, there exists an $S\in R^{m\times m}$ such that $S$
is invertible as an element of $R^{m\times m}$, and for some integer $r\geq 0$, 
$
S^{-1} P S=[\begin{smallmatrix}
I_r & 0\\
0 & 0
\end{smallmatrix}].
$
In 1976, it was shown independently by Quillen and Suslin,  that if $\mF$ is a field, then the polynomial
ring $\mF[x_1, \cdots , x_n]$  is
projective-free, settling Serre's conjecture from 1955 (see \cite{Lam}). 

\subsection{$\calA(\bbp)$ is a Hermite ring}

\noindent 
It is known that a commutative unital ring having Bass stable rank $\le 2$ is Hermite (see e.g., \cite[Corollary~36.17]{MorRup}).  In light of this result and Theorem~\ref{theorem_bsr_Ap}, we have:
 
 \begin{corollary}
 $\calA(\bbp)$ is a Hermite ring.
 \end{corollary}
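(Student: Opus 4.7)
The plan is essentially a one-line invocation: combine Theorem~\ref{theorem_bsr_Ap}, which shows that the Bass stable rank of $\calA(\bbp)$ equals $1$, with the classical implication recalled just above the corollary statement, namely that any commutative unital ring whose Bass stable rank is at most $2$ is automatically Hermite (the Mortini--Rupp reference cited there). Since $1\leq 2$, the conclusion follows immediately, and no additional work specific to $\calA(\bbp)$ is required.

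Should one prefer a self-contained verification via the matricial formulation of the Hermite property recorded in the excerpt, the strategy would be to start with a left-invertible tall matrix $f\in \calA(\bbp)^{K\times k}$ (where $k<K$), admitting a left inverse $g\in \calA(\bbp)^{k\times K}$. One would use the Bass stable rank $1$ property to perform elementary column operations on $f$ that reduce its bottom entry in one of the columns to a unit in $\calA(\bbp)$, permitting an elimination that drops the problem from size $K\times k$ to size $(K-1)\times(k-1)$. An induction on $K-k$ (with an obvious base case where the matrix is already square and invertible) then yields a completion $f_c\in \calA(\bbp)^{K\times(K-k)}$ and an invertible $G\in \calA(\bbp)^{K\times K}$ with $G[\,f \; f_c\,]=I_K$.

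The main (and essentially only) obstacle in the direct argument is performing the reduction step while staying inside $\calA(\bbp)$, and this is precisely the content that Theorem~\ref{theorem_bsr_Ap} has already established. Consequently, I expect the proof in the paper to opt for the one-line citation-based route, and that is the approach I would take as well.
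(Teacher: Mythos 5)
Your primary proof is exactly the paper's argument: cite Theorem~\ref{theorem_bsr_Ap} (Bass stable rank $1$) together with the Mortini--Rupp result that Bass stable rank at most $2$ implies Hermite, and conclude immediately. The paper does not pursue the self-contained matricial reduction you sketch as an alternative.
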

 
 \subsection{$\calA(\bbp)$ is not a projective-free ring}
 
\noindent 
While every projective free ring is Hermite, the converse may not hold. In fact $\calA(\bbp)$ is such an example, by using the  matricial characterisation of projective free rings.

\begin{theorem}
$\calA(\bbp)$ is not projective free.
\end{theorem}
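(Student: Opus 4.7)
The plan is to use the matricial characterisation of projective-free rings recalled above, but already in the trivial case $m=1$. For a $1\times 1$ matrix $[p]\in \calA(\bbp)^{1\times 1}$ and any invertible $[s]\in \calA(\bbp)^{1\times 1}$, the commutativity of $\ast$ yields $[s]^{-1}[p][s]=[s^{-1}\ast p\ast s]=[p]$. Hence the only $1\times 1$ idempotent matrices of the prescribed normal form $[\begin{smallmatrix}I_r & 0\\ 0 & 0\end{smallmatrix}]$ that $[p]$ could equal under conjugation are $[0]$ and $[\varepsilon]$. It therefore suffices to exhibit an idempotent $p\in \calA(\bbp)$ different from both $0$ and $\varepsilon$.

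The first step is to classify all idempotents of $\calA(\bbp)$. Since $\widehat{p\ast p}(n)=\bbp(n)\widehat{p}(n)^2$, the equation $p\ast p=p$ is equivalent to $\bbp(n)\widehat{p}(n)^2=\widehat{p}(n)$ for every $n\in \mN_0$, so $\widehat{p}(n)\in\{0,\tfrac{1}{\bbp(n)}\}$. Any sequence taking values in this set automatically defines an element of $\calA(\bbp)$, because $|\widehat{p}(n)|\le \tfrac{1}{\bbp(n)}$; hence the idempotents of $\calA(\bbp)$ correspond bijectively to the subsets of $\mN_0$, and in particular there are many non-trivial ones.

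For the explicit counter-example I would take the constant function $p(z):=\tfrac{1}{\bbp(0)}$, corresponding to the subset $\{0\}\subsetneq \mN_0$. A one-line computation with the weighted Hadamard product gives $p\ast p=p$; clearly $p\neq 0$, and $p\neq \varepsilon$ because $\widehat{\varepsilon}(1)=\tfrac{1}{\bbp(1)}\neq 0=\widehat{p}(1)$. Therefore $[p]\in \calA(\bbp)^{1\times 1}$ is an idempotent matrix which is not conjugate (trivially, since the ring is commutative) to any matrix of the form $[\begin{smallmatrix}I_r & 0\\ 0 & 0\end{smallmatrix}]$, and so by the matricial criterion $\calA(\bbp)$ fails to be projective-free.

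The only real obstacle here is conceptual rather than technical: one must notice that the weighted Hadamard product furnishes far more idempotents than just $0$ and $\varepsilon$, so the projective-free property already breaks down at the scalar level and no honest matrix-level construction is required.
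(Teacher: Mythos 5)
Your proof is correct and follows essentially the same route as the paper: find a nontrivial idempotent $p$ in $\calA(\bbp)$, observe that in the $1\times 1$ case conjugation is trivial by commutativity, and conclude that $p$ would have to equal $0$ or $\varepsilon$, a contradiction. The only cosmetic difference is the choice of idempotent — the paper uses $P=\sum_{m\ge 0}\frac{1}{\bbp(2m)}z^{2m}$, whereas you use the even simpler constant function $p(z)=\frac{1}{\bbp(0)}$ — but both work, and your classification of all idempotents reproduces what the paper records separately as Theorem~\ref{11_3_2023_1416}.
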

\begin{proof} Suppose $\calA(\bbp)$ is projective free. Let 
 $$
 \begin{array}{c}
P=\sum\limits_{m=0}^\infty  \frac{1}{\bbp(2m)} z^{2m}.
\end{array}
$$
Then $P\in \calA(\bbp)$, and 
$$
\begin{array}{rcl}
\widehat{P\ast P}(n)
\!\!\!&=&\!\!\!
\bbp(n) \widehat{P}(n) \widehat{P}(n)
\\[0.1cm]
\!\!\!&=&\!\!\!
\left\{ \begin{array}{l} \bbp(2m)  \frac{1}{(\bbp(2m))^2}=\frac{1}{\bbp(2m)}
\textrm{ if } n=2m,\;m\in \mN_0\\[0.1cm]
 \bbp(2m) \;\! 0^2 =0 \textrm{ if } n=2m+1, \;m\in \mN_0. 
 \end{array}\right.
 \end{array}
 $$
 So $P\ast P=P$. As we have assumed $\calA(\bbp)$ is projective free,  there is an  $f\in \{0,\epsilon\}$, $D=[\begin{smallarray}{c}f\end{smallarray}]$, and $S,S^{-1}\in \calA(\bbp)$ 
 such that $P=S^{-1} \ast D\ast S$. But then $P=0$ or $P=\varepsilon$, and either case is false. Consequently, $\calA(\bbp)$ is not projective free.  
\end{proof}

\noindent For a Banach space $A$, we denote the set of all continuous linear maps from $A$ to $\mC$ by $A^*$. Recall that for a commutative unital complex Banach algebra $A$,  the maximal ideal space $M(A)\subset A^\ast$  
 is the set of nonzero homomorphisms $A \!\rightarrow\! \mC\;$, endowed with the Gelfand topology, the weak-$\ast$ topology of  $A^\ast$. It is a compact Hausdorff space contained in the unit sphere of $A^\ast$.    Contractibility of the maximal ideal space $M(A)$ in the Gelfand topology suffices for $A$ to be projective-free (see, e.g., \cite[Corollary~1.4]{BruSas}). Thus   
the maximal ideal space $M(\calA(\bbp))$ is not contractible.

\vspace{-0.45cm}

\section{Idempotents in $\calA(\bbp)$}

\noindent The following result characterises idempotents in $\calA(\bbp)$.

\begin{theorem}
\label{11_3_2023_1416}
$f\in \calA(\bbp)$ is an idempotent if and only if 
for all $n\in \mN_0,$ $\widehat{f}(n)\in \{0, \frac{1}{\bbp(n)}\}$. 
\end{theorem}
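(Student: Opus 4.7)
The plan is to translate the idempotent condition $f \ast f = f$ into a coefficient-wise condition using the definition of the weighted Hadamard product, and then read off the conclusion by unique-ness of Taylor coefficients.

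First I would note that, by the definition of $\ast$, for any $f \in \calA(\bbp)$ the Taylor coefficients of $f \ast f$ are given by
\[
\widehat{f \ast f}(n) \;=\; \bbp(n)\,\widehat{f}(n)^{2}\qquad (n \in \mN_0).
\]
Hence, invoking uniqueness of Taylor expansion (valid since these are entire functions), the equation $f\ast f=f$ is equivalent to the family of scalar equations
\[
\bbp(n)\,\widehat{f}(n)^{2}\;=\;\widehat{f}(n),\qquad n\in \mN_0,
\]
i.e.\ $\widehat{f}(n)\bigl(\bbp(n)\widehat{f}(n)-1\bigr)=0$ for every $n$. Solving this quadratic coefficient-by-coefficient yields $\widehat{f}(n)\in \{0,\tfrac{1}{\bbp(n)}\}$ for all $n$, giving the forward direction.

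For the converse, I would take any sequence $(\widehat{f}(n))_{n\geq 0}$ with $\widehat{f}(n) \in \{0, \tfrac{1}{\bbp(n)}\}$ and first verify that it defines an element of $\calA(\bbp)$: the bound $|\widehat{f}(n)|\leq \tfrac{1}{\bbp(n)}$ is trivially of the required order $O(\tfrac{1}{\bbp(n)})$, and the series $\sum_{n\geq 0}\widehat{f}(n)z^{n}$ converges on all of $\mC$ because $\bbp(n)^{1/n}\to\infty$ forces $|\widehat{f}(n)|^{1/n}\to 0$. Then reversing the computation above, $\bbp(n)\widehat{f}(n)^{2}=\widehat{f}(n)$ holds termwise (both sides equal $0$ when $\widehat{f}(n)=0$, and both equal $\tfrac{1}{\bbp(n)}$ when $\widehat{f}(n)=\tfrac{1}{\bbp(n)}$), so $f\ast f = f$.

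I do not expect any serious obstacle: the result is essentially a one-line consequence of the diagonal nature of $\ast$ with respect to the basis $\{z^{n}\}$. The only point that needs a brief comment is well-definedness of $f$ as an entire function when the coefficients are prescribed arbitrarily from $\{0,\tfrac{1}{\bbp(n)}\}$, which is handled by the growth hypothesis on $\bbp$.
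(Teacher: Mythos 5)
Your argument is correct and is essentially the same as the paper's: read off the coefficient equation $\bbp(n)\widehat{f}(n)^2=\widehat{f}(n)$ from $f\ast f=f$ via uniqueness of Taylor coefficients, and observe the converse termwise. The only small redundancy is your verification that the prescribed coefficients define an element of $\calA(\bbp)$ — the theorem already quantifies over $f\in\calA(\bbp)$, so $f$ is given to lie in the algebra — but this does no harm.
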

\begin{proof} The `if' part is immediate from the definition of multiplication in $\calA(\bbp)$. If $f\in \calA(\bbp)$ is an idempotent, then $f\ast f=f$, and so 
$$
\begin{array}{c}
\sum\limits_{n=0}^\infty (\bbp(n)(\widehat{f}(n))^2-\widehat{f}(n)) z^n=(f\ast f)(z)-f(z)=0\quad (z\in \mC).
\end{array}
$$ 
Thus $\bbp(n)(\widehat{f}(n))^2-\widehat{f}(n)=0$ for all $n\in \mN_0$. So $\widehat{f}(n)\in \{0,\frac{1}{\bbp(n)}\}$. 
\end{proof}

\noindent Let $A$ be a commutative unital complex Banach algebra. The {\em Gelfand transform} of $a\in A$, defined by $\hat{a}(\pmb{\varphi}) := \pmb{\varphi}(a)$ for
 $\pmb{\varphi} \in  M(A)$, is a nonincreasing-norm morphism from $A$ into $C(M(A))$, the Banach algebra of complex-valued continuous functions on $M(A)$ equipped with the supremum norm $\|\cdot\|_\infty$ (given by $\|f\|_\infty:=\sup_{\varphi \in M(A)} |f(\varphi)|$ for all $f\in C(M(A))$.
 
From the special case of  (R3) on page~\pageref{5_3_2023_1335} when  $f=\varepsilon$ (the identity element of $\calA(\bbp)$), we have the following corona theorem:

\begin{proposition}
Let  $f_1,\cdots, f_n\in \calA(\bbp)$ $(n\in \mN)$. Then the following are equivalent:

\noindent {\em (1)} There exists a $\delta>0$ such that 
for all $k\in \mN_0,$ $\sum\limits_{i=1}^n |\widehat{f_i}(k)|\geq \frac{\delta}{\bbp(k)}$. 

\noindent {\em (2)}  There exist $g_1,\cdots, g_n\in \calA(\bbp)$ such that $\sum\limits_{i=1}^n 
g_i \ast f_i =\varepsilon$. 
\end{proposition}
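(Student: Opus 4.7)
The plan is to derive the equivalence as a direct specialisation of property (R3) to the case $f=\varepsilon$. First I would unpack the right-hand side of the desired equivalence: by definition of the ideal generated by $f_1,\ldots,f_n$ in the unital algebra $\calA(\bbp)$, condition (2) says precisely that $\varepsilon\in \langle f_1,\ldots,f_n\rangle$. Second, I would recall from \eqref{identity_element} that the Taylor coefficients of the unit element are $\widehat{\varepsilon}(k)=\frac{1}{\bbp(k)}$ for every $k\in \mN_0$.

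The core step is then to invoke (R3) with $f=\varepsilon$: this tells us that $\varepsilon\in \langle f_1,\ldots,f_n\rangle$ if and only if there exists a constant $C>0$ such that
$$
\frac{1}{\bbp(k)}=|\widehat{\varepsilon}(k)|\leq C\sum_{i=1}^n |\widehat{f_i}(k)|\quad (k\in \mN_0).
$$
Rearranging, this is equivalent to
$$
\sum_{i=1}^n |\widehat{f_i}(k)|\geq \frac{\delta}{\bbp(k)}\quad (k\in \mN_0),
$$
with $\delta:=\frac{1}{C}>0$, which is exactly condition (1). Conversely, any $\delta>0$ witnessing (1) yields the constant $C=\frac{1}{\delta}$ needed for the hypothesis of (R3).

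Since both implications are obtained by reading the two forms of the inequality in opposite directions and applying (R3) once, there is no genuine analytic obstacle: the whole content is already packaged in (R3) together with the explicit form of $\varepsilon$. The only point requiring care is the purely bookkeeping observation that $\delta$ and $C$ are reciprocals of one another, and that (2) is literally the statement that $\varepsilon$ lies in the ideal generated by $f_1,\ldots,f_n$.
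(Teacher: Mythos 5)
Your proof is correct and follows exactly the route the paper indicates: the paper states that this proposition is the special case of (R3) with $f=\varepsilon$, and your argument just unpacks that observation, using $\widehat{\varepsilon}(k)=\frac{1}{\bbp(k)}$ and the reciprocal relationship $\delta=\frac{1}{C}$. Nothing is missing, and nothing is done differently.
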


\noindent For $k\in \mN_0$, let $\varphi_k$ denote the complex homomorphism given by   
$$
\begin{array}{c}
\varphi_k(f)=\widehat{f}(k)\textrm{ for all }f\in \calA(\bbp).
\end{array}
$$ 
From (R5) (p.~\pageref{11_3_2023_1509}), we know that $\varphi_k\in M(\calA(\bbp))$.  
From elementary Banach algebra theory (see e.g., \cite[Lemma~9.2.6]{Nik}), it follows that the 
 set $\{\varphi_k:k\in \mN_0\}$ is dense in $M(\calA(\bbp))$.

We will now show that $\{\varphi_k:k\in \mN_0\}$, with the topology induced from $M(\calA(\bbp))$, is homeomorphic to $\mN_0$, with the topology induced from $\mR$. In the proof below, we will use the notation $\delta_{m,n}:=0$ if $m\neq n$ and $\delta_{m,m}:=1$ for all $m,n\in \mN_0$. 

\begin{proposition}
For $k_0\in \mN_0,$ a net $(\varphi_{k_i})_{i\in I}$ converges to $\varphi_{k_0}$ in $M(\calA(\bbp))$ if and only if $(k_i)_{i\in I}$ is eventually constant$,$ equal to $k_0$. 
\end{proposition}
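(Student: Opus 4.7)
The plan is to unwind the Gelfand (weak-$\ast$) topology on $M(\calA(\bbp))$: by definition, a net $(\varphi_{k_i})$ converges to $\varphi_{k_0}$ if and only if $\widehat{f}(k_i)=\varphi_{k_i}(f)\to\varphi_{k_0}(f)=\widehat{f}(k_0)$ for every $f\in\calA(\bbp)$. The ``if'' direction is then immediate, since if $k_i=k_0$ eventually, then $\varphi_{k_i}(f)=\widehat{f}(k_0)=\varphi_{k_0}(f)$ eventually for every $f\in\calA(\bbp)$.

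For the ``only if'' direction, I would produce a single separating test function. Consider the monomial $f_0(z):=z^{k_0}/\bbp(k_0)$, whose Taylor coefficients are $\widehat{f_0}(n)=\delta_{n,k_0}/\bbp(k_0)$. Since $\bbp(n)|\widehat{f_0}(n)|\leq 1$ for every $n$, we have $f_0\in\calA(\bbp)$. Consequently, $\varphi_k(f_0)=\widehat{f_0}(k)=\delta_{k,k_0}/\bbp(k_0)$, which equals $1/\bbp(k_0)>0$ when $k=k_0$ and $0$ whenever $k\neq k_0$.

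Now suppose, toward a contradiction, that $(\varphi_{k_i})_{i\in I}$ converges to $\varphi_{k_0}$ but $(k_i)_{i\in I}$ is \emph{not} eventually equal to $k_0$. The latter says that $J:=\{i\in I:k_i\neq k_0\}$ is cofinal in $I$; along $J$, the values $\varphi_{k_i}(f_0)=0$ occur arbitrarily late. On the other hand, weak-$\ast$ convergence at the test function $f_0$ gives $\varphi_{k_i}(f_0)\to 1/\bbp(k_0)$, and taking $\epsilon:=1/(2\bbp(k_0))$ yields an index $i_0$ beyond which $\varphi_{k_i}(f_0)>1/(2\bbp(k_0))>0$. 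In particular $k_i=k_0$ for all $i\geq i_0$, contradicting the cofinality of $J$.

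There is no real obstacle in the argument: the single observation driving the proof is that the monomial $z^{k_0}/\bbp(k_0)$ automatically lies in $\calA(\bbp)$ by the $O(1/\bbp(n))$ growth condition and separates $\varphi_{k_0}$ from every other $\varphi_k$. Thus convergence of the homomorphism net forces the underlying integer net to stabilise at $k_0$.
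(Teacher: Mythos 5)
Your proof is correct and essentially identical to the paper's: both use the monomial $z^{k_0}$ (you merely normalize by $1/\bbp(k_0)$, which is immaterial) as the separating test function and invoke weak-$\ast$ convergence at that single function to force $k_i=k_0$ eventually. The only cosmetic difference is that you phrase the ``only if'' direction as a contradiction via cofinality of $\{i:k_i\neq k_0\}$, whereas the paper argues directly from the $\epsilon=\tfrac12$ threshold; these are the same argument.
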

\begin{proof}`If' part: Let $(k_i)_{i\in I}$ be eventually constant, equal to $k_0$.  There exists an $i_*\in I$ such that for all $i\ge i_*$ (where $\ge$ denotes the order on the directed set $I$), $k_i=k_0$, and so  for all $f\in \calA(\bbp)$, we have 
that  $\varphi_{k_i}(f)\!=\!\widehat{f}(k_i)\!=\!\widehat{f}(k_0)\!=\! \varphi_{k_0}(f)$. So $(\varphi_{k_i})_{i\in I}$ converges to $\varphi_{k_0}$ in $M(\calA(\bbp))$.

\noindent `Only if' part: Suppose that $(\varphi_{k_i})_{i\in I}$ converges to $\varphi_{k_0}$ in $M(\calA(\bbp))$. Then with $f=z^{k_0}\in \calA(\bbp)$, by the definition of the Gelfand topology,  the net $(\varphi_{k_i}(z^{k_0}))_{i\in I}$ converges to $\varphi_{k_0}(z^{k_0})=1$ in $\mR$. Thus for $\epsilon=\frac{1}{2}>0$, there exists an $i_*\in I$ such that  $|\delta_{k_i , k_0}-1|=|\varphi_{k_i}(z^{k_0})-1|<\frac{1}{2}$ for all $i\ge i_*$, i.e., $k_{i}=k_0$.
\end{proof}

\noindent As $M(\calA(\bbp))$ is compact, while $\mN_0$ with its usual topology is not compact, it follows that not all elements of $M(\calA(\bbp))$ are of the form $\varphi_k$ for some $k\in \mN_0$. Explicit examples are known (see below), and these were mentioned as non-fixed maximal ideals in \cite[Remark, pp. 6-7]{vRen}.

\begin{example} 
Let $\bbk=(k_n)_{n\in \mN}$ be any subsequence of the sequence of natural numbers. Define 
$$
\begin{array}{c}
I_{\bbk}:=\{f\in \calA(\bbp):\lim\limits_{n\rightarrow \infty} \bbp_{k_n}\widehat{f}(k_n)=0\}.
\end{array}
$$
 Then $I_{\bbk}$ is an ideal of $\calA(\bbp)$. (It is clear that if $f,g\in I_{\bbk}$, then $f+g\in I_{\bbk}$. If $f\in I_{\bbk}$ and $g\in \calA(\bbp)$, then there exists a $C>0$ such that 
$$
\begin{array}{c}
|\widehat{g}(k)|\leq \frac{C}{\bbp(k)}\textrm{ for all }k\in \mN_0,
\end{array}
$$
 and so 
$$
\begin{array}{rcl}
0\leq \bbp(k_n)|\reallywidehat{(f\ast g)}(k_{n})|
\!\!\!&=&\!\!\!
\bbp(k_n)|\bbp(k_n)\widehat{f}(k_n)\widehat{g}(k_n)|\\[0.1cm]
\!\!\!&\leq &\!\!\!
\bbp(k_n)\bbp(k_n)|\widehat{f}(k_n)|\frac{C}{\bbp(k_n)}
\stackrel{n\rightarrow \infty}{\longrightarrow} 0.
\end{array}
$$ 
Thus $f\ast g\in \calA(\bbp)$.) Moreover, $I_{\bbk}\neq \calA(\bbp)$ since $\varepsilon\not\in I_{\bbk}$:
$$
\begin{array}{c}
\lim\limits_{n\rightarrow \infty} \bbp(k_n) |\widehat{\varepsilon}(k_n)|
=\lim\limits_{n\rightarrow \infty} \bbp(k_n) \frac{1}{\bbp(k_n)}=1\neq 0.
\end{array}
$$
Hence there exists a maximal ideal $M$ in $\calA(\bbp)$ such that $I_{\bbk}\subset M$. We note that for each $m\in \mN_0$, $M\neq \ker \varphi_{m}$ (since for any  $m\in \mN_0$, $f:=z^m\in I_{\bbk}\subset M$, and then $\varphi_m(f)=1\neq 0$). 
\hfill$\Diamond$
\end{example}

 \noindent Recall the Shilov idempotent theorem 
 (see, e.g., \cite[Corollary~6.5]{Gam}): If $E$ is an open-closed subset of $M(A)$, then there is a unique element $f$ of $A$ such that $f^2=f$ and $\hat{f}=\mathbf{1}_E$ (the characteristic function of $E$, which is identically equal to $1$ on $E$, and $0$ elsewhere on $M(A)\setminus E$).  From Theorem~\ref{11_3_2023_1416}, 
 we get the following. 
 
 \begin{corollary} 
  If $\bbp(n)\neq 1$ for all $n\in \mN_0,$ then $M(\calA(\bbp))$ is connected.
  \end{corollary}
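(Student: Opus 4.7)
The plan is to prove the contrapositive by a contradiction argument built on the Shilov idempotent theorem combined with the explicit description of idempotents in Theorem~\ref{11_3_2023_1416}. Suppose $M(\calA(\bbp))$ is disconnected. Then there exists a nonempty proper clopen subset $E \subset M(\calA(\bbp))$. By the Shilov idempotent theorem just recalled, there is an $f \in \calA(\bbp)$ with $f \ast f = f$ and $\widehat{f}{}^\wedge = \mathbf{1}_E$ (here the outer hat denotes the Gelfand transform, and $\widehat{f}(n)$ still denotes the Taylor coefficient).

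Next I would combine two pieces of information about the values of $f$. On one hand, Theorem~\ref{11_3_2023_1416} gives $\widehat{f}(k) \in \{0, \tfrac{1}{\bbp(k)}\}$ for every $k \in \mN_0$. On the other hand, for each complex homomorphism $\varphi_k$ (the evaluation of the $k$-th Taylor coefficient), we have
\[
\varphi_k(f) = \widehat{f}(k) = \mathbf{1}_E(\varphi_k) \in \{0,1\}.
\]
Therefore $\widehat{f}(k)$ lies in the intersection $\{0, \tfrac{1}{\bbp(k)}\} \cap \{0,1\}$. Under the hypothesis $\bbp(k) \neq 1$ for every $k \in \mN_0$, the value $\tfrac{1}{\bbp(k)}$ is never $1$, so this intersection collapses to $\{0\}$. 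Hence $\widehat{f}(k)=0$ for all $k$, meaning $f=0$ and consequently $\mathbf{1}_E$ vanishes on the dense set $\{\varphi_k : k \in \mN_0\}$.

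To close the argument I would invoke continuity of $\mathbf{1}_E$ (automatic since $E$ is clopen): its zero set is closed and contains a dense subset of $M(\calA(\bbp))$, so $\mathbf{1}_E \equiv 0$, i.e.\ $E = \emptyset$, contradicting the choice of $E$. I do not anticipate a real obstacle here: the Shilov theorem is cited just before the corollary, the idempotent classification is exactly Theorem~\ref{11_3_2023_1416}, and the density of $\{\varphi_k\}$ in $M(\calA(\bbp))$ was noted immediately after the corona-type proposition. The only minor subtlety is making sure that $\widehat{f}$ (Taylor coefficient) and $\widehat{f}{}^\wedge$ (Gelfand transform) are kept distinct in the write-up, since both are commonly denoted by a hat.
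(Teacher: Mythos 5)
Your proposal is correct and takes essentially the same route as the paper: Shilov's idempotent theorem produces $f$ with Gelfand transform $\mathbf{1}_E$, Theorem~\ref{11_3_2023_1416} forces $\widehat{f}(k)\in\{0,1/\bbp(k)\}$, and evaluating the Gelfand transform at $\varphi_k$ forces $\widehat{f}(k)\in\{0,1\}$, so under $\bbp(k)\neq 1$ all Taylor coefficients vanish; density of $\{\varphi_k\}$ then yields $E=\emptyset$. The only (harmless) redundancy is that once you conclude $f=0$, the Gelfand transform $\hat{f}=\mathbf{1}_E$ is identically zero directly, so the final appeal to density and continuity of $\mathbf{1}_E$ is not actually needed at that point; the paper instead argues directly that no $\varphi_n$ lies in $E$ and then invokes density, which is the same idea.
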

  \begin{proof}
  If $E\subsetneq M(\calA(\bbp))$ is closed and open, then there exists an idempotent $f$ in $\calA(\bbp)$ such that $\hat{f}=\mathbf{1}_E$. If $n\in \mN_0$ is such that  $\varphi_n\in E$, we get $\{0,\frac{1}{\bbp(n)}\}\owns \widehat{f}(n)=\varphi_n(f)=\hat{f}(\varphi_n)=\bm{1}_E(\varphi_n)=1$, so that $\bbp(n)=1$, a contradiction. So $E$ does not contain any $\varphi_n$, $n\in \mN_0$, so that $M(\calA(\bbp))\setminus E$ contains $\{\varphi_n:n\in \mN_0\}$. As  $\{\varphi_n:n\in \mN_0\}$ is dense in $M(\calA(\bbp))$, we conclude that  $M(\calA(\bbp))\setminus E=M(\calA(\bbp))$, i.e., $E=\emptyset $. Thus  $M(\calA(\bbp))$ is connected.
  \end{proof}

\section{Exponentials in $\calA(\bbp)$}

\noindent We will show that every invertible element in $\calA(\bbp)$ has a logarithm. 

\begin{lemma}
If $f\in \calA(\bbp),$ then for all $z\in \mC,$ $|f(z)|\leq \varepsilon(|z|) \|f\|$.
\end{lemma}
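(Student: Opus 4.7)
The statement is essentially a direct estimate on the power series of $f$, with no real obstacle. The plan is to expand $f(z)$ as its Taylor series, apply the triangle inequality, and then use the defining inequality $\bbp(n)|\widehat{f}(n)|\leq \|f\|$ (which follows at once from the definition of the norm as a supremum) to bound each coefficient by $\|f\|/\bbp(n)$.

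More precisely, for any $z\in\mC$ I would write
$$
|f(z)|=\Bigl|\sum_{n=0}^\infty \widehat{f}(n)\,z^n\Bigr|\leq \sum_{n=0}^\infty |\widehat{f}(n)|\,|z|^n
\leq \sum_{n=0}^\infty \frac{\|f\|}{\bbp(n)}\,|z|^n
=\|f\|\sum_{n=0}^\infty \frac{1}{\bbp(n)}\,|z|^n
=\|f\|\,\varepsilon(|z|),
$$
where the absolute convergence of the first series is guaranteed because $f\in\calA(\bbp)$ is entire, and the evaluation of the last series as $\varepsilon(|z|)$ is legitimate because $\varepsilon$ itself is entire, thanks to the assumption $\lim_{n\to\infty}\bbp(n)^{1/n}=\infty$ which gives infinite radius of convergence. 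The supremum bound on the coefficients, i.e.\ $|\widehat{f}(n)|\leq \|f\|/\bbp(n)$ for every $n\in\mN_0$, is immediate from the definition $\|f\|=\sup_{n\geq 0}\bbp(n)|\widehat{f}(n)|$.

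There is no hard step here: the argument is a one-line triangle inequality followed by recognising the resulting series as $\varepsilon(|z|)$. The only subtlety worth flagging explicitly, which I would mention in the proof, is that $\varepsilon(|z|)$ indeed makes sense as a finite positive real number for every $z\in\mC$, since $\varepsilon$ is entire by the growth hypothesis on $\bbp$.
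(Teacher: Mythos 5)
Your proof is correct and is essentially identical to the paper's: both expand $f$ in its Taylor series, apply the triangle inequality, and use the coefficient bound $|\widehat{f}(n)|\leq \|f\|/\bbp(n)$ that follows directly from the definition of the norm, then recognise the resulting series as $\|f\|\,\varepsilon(|z|)$. The extra remark you add about the entirety of $\varepsilon$ guaranteeing finiteness is a harmless (and reasonable) clarification not spelled out in the paper.
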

\begin{proof} As $|\widehat{f}(k)|=\frac{\bbp(k) |\widehat{f}(k)|}{\bbp(k)} \leq \frac{\|f\|}{\bbp(k)}$ for all $k\in \mN_0$, we have 
$$
\begin{array}{c}
|f(z)| =|\sum\limits_{k=0}^\infty \widehat{f}(k) z^k| 
\leq \sum\limits_{k=0}^\infty \frac{\|f\|}{\bbp(k)}|z|^k =\|f\|\varepsilon(|z|).
\end{array} \eqno\qedhere
$$
\end{proof}

\noindent It follows that if $(f_n)_{n\in \mN}$ is a convergent sequence in $\calA(\bbp)$, then it converges pointwise. 

\begin{lemma}
\label{7_3_2023_1748}
If $f\in \calA(\bbp),$ then $(e^f)(z)=\sum\limits_{k=0}^\infty 
\frac{e^{\bbp(k) \widehat{f}(k)}}{\bbp(k)} z^k$ for all $z\in \mC$. 
\end{lemma}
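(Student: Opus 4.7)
The plan is to unfold the Banach algebra exponential $e^f := \sum_{n=0}^\infty \frac{f^{\ast n}}{n!}$ (which converges absolutely in $(\calA(\bbp),\|\cdot\|)$ since $\calA(\bbp)$ is a Banach algebra), compute each $\ast$-power coefficient-wise, sum coefficient-wise, and then invoke the previous lemma to pass to pointwise values.

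First I would prove by induction on $n \geq 1$ that $\widehat{f^{\ast n}}(k) = \bbp(k)^{n-1}\,(\widehat{f}(k))^n$ for all $k\in\mN_0$. The base case $n=1$ is trivial, and the inductive step uses only the definition of $\ast$:
\[
\widehat{f^{\ast(n+1)}}(k)=\bbp(k)\,\widehat{f^{\ast n}}(k)\,\widehat{f}(k)=\bbp(k)\cdot \bbp(k)^{n-1}(\widehat{f}(k))^n\cdot\widehat{f}(k).
\]
Since $f^{\ast 0}=\varepsilon$ has $\widehat{\varepsilon}(k)=\tfrac{1}{\bbp(k)}$, the uniform formula $\widehat{f^{\ast n}}(k)=\tfrac{(\bbp(k)\widehat{f}(k))^{n}}{\bbp(k)}$ holds for every $n\in\mN_0$.

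Next I would extract the $k$-th Taylor coefficient of $e^f$. Because the linear functional $\varphi_k:g\mapsto \widehat{g}(k)$ satisfies $|\varphi_k(g)|\leq \|g\|/\bbp(k)$, it is bounded on $\calA(\bbp)$, so it may be exchanged with the norm-convergent sum defining $e^f$:
\[
\widehat{e^f}(k)=\sum_{n=0}^\infty \frac{\widehat{f^{\ast n}}(k)}{n!}
=\frac{1}{\bbp(k)}\sum_{n=0}^\infty \frac{(\bbp(k)\widehat{f}(k))^n}{n!}
=\frac{e^{\bbp(k)\widehat{f}(k)}}{\bbp(k)}.
\]

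Finally, since $e^f\in\calA(\bbp)$ is, by definition, an entire function represented by its Taylor series $(e^f)(z)=\sum_{k=0}^\infty \widehat{e^f}(k)\,z^k$ on all of $\mC$, substituting the expression just computed yields the claimed formula. I do not anticipate a genuine obstacle: the only subtlety is justifying the coefficient-wise interchange, which is handled cleanly by the continuity of $\varphi_k$ as above (equivalently, by the preceding lemma showing that norm convergence implies pointwise convergence, applied at $z=0$ to $(e^f)^{(k)}/k!$ after differentiating term-by-term, although the bounded-functional argument is the more direct route).
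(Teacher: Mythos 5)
Your proof is correct, and it takes a genuinely different route from the paper's. The paper works with the pointwise values $(e^f)(z)$ directly: it writes $(e^f)(z)=\sum_{n\geq 0} (f^n)(z)/n!$ as a double sum over $n$ and $k$, verifies absolute convergence of that double series (bounding it by $e^{\|f\|}\varepsilon(|z|)$), and then invokes Fubini--Tonelli to swap the order of summation. You instead work coefficient-wise: you observe that $\varphi_k:g\mapsto\widehat g(k)$ is a bounded linear functional (indeed a character) on $\calA(\bbp)$, so it commutes with the norm-convergent series $e^f=\sum_n f^{\ast n}/n!$, giving $\widehat{e^f}(k)$ in closed form; the pointwise statement then falls out of the fact that any element of $\calA(\bbp)$ is its own Taylor series. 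Both arguments turn on the same coefficient computation $\widehat{f^{\ast n}}(k)=\bbp(k)^{n-1}(\widehat f(k))^n$. The paper's route is more elementary (no functional-analytic vocabulary, just absolute convergence of a double series), while yours is conceptually cleaner and entirely sidesteps the Tonelli estimate by delegating the interchange to continuity of $\varphi_k$ --- a one-line justification that scales better (it is essentially the same argument the paper later reuses, with more effort, in the matrix setting of Theorem~\ref{8_3_2023_1047}).
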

\begin{proof} We note that $(f^n)(z)=\sum\limits_{k=0}^\infty \bbp(k)^{n-1} (\widehat{f}(k))^n z^k$ ($z\in \mC$). We have 
$$
\begin{array}{rcl}
\sum\limits_{n=0}^\infty\sum\limits_{k=0}^\infty 
|\frac{\bbp(k)^{n-1}(\widehat{f}(k))^n }{n!} z^k|
\!\!\!&\leq &\!\!\!
\sum\limits_{n=0}^\infty\sum\limits_{k=0}^\infty  \frac{|z|^k}{\bbp(k)} \frac{\|f\|^n}{n!}
= 
\sum\limits_{k=0}^\infty\frac{|z|^k}{\bbp(k)}\sum\limits_{n=0}^\infty\frac{\|f\|^n}{n!} 
\\[0.3cm]
\!\!\!&=&\!\!\!
\sum\limits_{k=0}^\infty\frac{|z|^k}{\bbp(k)} e^{\|f\|} \\[0.39cm]
\!\!\!&=&\!\!\! e^{\|f\|} \varepsilon (|z|)<\infty.
\end{array}
$$
So exchange of summations below is allowed. We have
$$
\phantom{AAAAAAAAAAAA}
\begin{array}{rcl}
(e^f)(z)\!\!\!&=&\!\!\!\sum\limits_{n=0}^\infty \frac{(f^n)(z)}{n!} 
=\sum\limits_{n=0}^\infty\sum\limits_{k=0}^\infty 
\frac{\bbp(k)^{n-1}(\widehat{f}(k))^n }{n!}  z^k
\\
\!\!\!&=&\!\!\!\sum\limits_{k=0}^\infty \frac{z^k}{\bbp(k)} 
\sum\limits_{n=0}^\infty \frac{(\bbp(k)\widehat{f}(k))^n}{n!}
\\
\!\!\!&=&\!\!\!\sum\limits_{k=0}^\infty \frac{z^k}{\bbp(k)}  e^{\bbp(k) \widehat{f}(k)}.
\phantom{AAAAAAAAAAA}
\qedhere
\end{array}
$$
\end{proof}

\noindent Let $\calA(\bbp)^{-1}$ denote the multiplicative group of invertible elements of $\calA(\bbp)$, and $e^{\calA(\bbp)}$ the subgroup of $\calA(\bbp)$ consisting of all exponentials $e^f$, $f\in \calA(\bbp)$. 

\begin{theorem}
$e^{\calA(\bbp)}=\calA(\bbp)^{-1}$.
\end{theorem}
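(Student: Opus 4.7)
The plan is to prove the two inclusions separately; neither is deep, thanks to the very explicit formula for $e^f$ in Lemma~\ref{7_3_2023_1748} and the divisibility criterion (R1).

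For the inclusion $e^{\calA(\bbp)}\subset \calA(\bbp)^{-1}$, I would simply compute, using Lemma~\ref{7_3_2023_1748}, that for every $f\in \calA(\bbp)$,
\[
\widehat{e^f\ast e^{-f}}(k)=\bbp(k)\cdot \frac{e^{\bbp(k)\widehat{f}(k)}}{\bbp(k)}\cdot \frac{e^{-\bbp(k)\widehat{f}(k)}}{\bbp(k)}=\frac{1}{\bbp(k)}=\widehat{\varepsilon}(k),
\]
so $e^f\ast e^{-f}=\varepsilon$, proving that $e^f$ is invertible with inverse $e^{-f}$.

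For the reverse inclusion $\calA(\bbp)^{-1}\subset e^{\calA(\bbp)}$, let $g\in \calA(\bbp)^{-1}$. By (R1) there exists $\delta>0$ with $|\widehat{g}(k)|\ge \delta/\bbp(k)$ for all $k\in \mN_0$, and the membership $g\in \calA(\bbp)$ gives $|\widehat{g}(k)|\le \|g\|/\bbp(k)$. Hence the sequence $w_k:=\bbp(k)\widehat{g}(k)$ lies in the annulus $\{w\in \mC:\delta\le |w|\le \|g\|\}$, bounded away from $0$. For each $k$, choose the principal logarithm $L_k:=\log|w_k|+i\arg w_k$ with $\arg w_k\in(-\pi,\pi]$; then
\[
|L_k|\le \max\{\log \|g\|,\,-\log \delta\}+\pi=:C.
\]
Define $f$ by $\widehat{f}(k):=L_k/\bbp(k)$. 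Since $|\widehat{f}(k)|\le C/\bbp(k)$ and $\bbp(k)^{1/k}\to\infty$, the series $f(z)=\sum_k \widehat{f}(k) z^k$ defines an entire function, and $f\in \calA(\bbp)$. By Lemma~\ref{7_3_2023_1748},
\[
\widehat{e^f}(k)=\frac{e^{\bbp(k)\widehat{f}(k)}}{\bbp(k)}=\frac{e^{L_k}}{\bbp(k)}=\frac{w_k}{\bbp(k)}=\widehat{g}(k),
\]
so $e^f=g$, completing the proof.

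There is no real obstacle: the only thing to be slightly careful about is that the pointwise choice of logarithm must yield a sequence whose modulus is uniformly bounded, so that $\widehat{f}(k)=O(1/\bbp(k))$ and $f$ indeed lands in $\calA(\bbp)$. This is immediate from the two-sided bound on $|w_k|$. No measurability or continuity in $k$ is needed, since $\mN_0$ is discrete, which sidesteps the usual branch-cut issues one faces when taking logarithms in $H^\infty$-type algebras on connected domains.
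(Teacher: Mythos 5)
Your proof is correct and, for the nontrivial inclusion $\calA(\bbp)^{-1}\subset e^{\calA(\bbp)}$, it follows exactly the same path as the paper: use (R1) to trap $\bbp(k)\widehat{g}(k)$ in an annulus $\delta\le |\cdot|\le \|g\|$, take the principal logarithm termwise, bound its modulus uniformly in $k$ to get membership in $\calA(\bbp)$, and invoke Lemma~\ref{7_3_2023_1748}. The only difference is that you also spell out the easy inclusion $e^{\calA(\bbp)}\subset \calA(\bbp)^{-1}$ by direct computation, which the paper treats as a standard Banach algebra fact; your uniform bound on $|L_k|$ is in fact slightly more carefully stated than the paper's.
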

\begin{proof} Let $g\in \calA(\bbp)^{-1}$. By (R1) on page~\pageref{3320231845}, there exists a $\delta>0$ such that $|\widehat{g}(k)|\geq \frac{\delta}{\bbp(k)}$ for all $k\in \mN_0$. In particular, $\widehat{g}(k)\neq 0$, and 
we define 
$$
\begin{array}{c}
\widehat{f}(k):=\frac{1}{\bbp(k)} \textrm{Log}(\bbp(k) \widehat{g}(k)), \quad k\in \mN_0.
\end{array}
$$
where $\textrm{Log}:\mC\setminus\{0\} \rightarrow \mR\times (-\pi,\pi]$ denotes the principal branch of the logarithm.  We have $0<\delta \leq \bbp(k)|\widehat{g}(k)|\leq \|g\|$ for all $k\in \mN_0$, and so 
$$
\begin{array}{c}
\sup_{k\in \mN_0} \bbp(k) |\widehat{f}(k)|\leq \sqrt{(\max\{\log \delta, \, \log \|g\|\})^2+\pi^2}=:C<\infty,
\end{array}
$$
showing that $f\in \calA(\bbp)$. It follows from Lemma~\ref{7_3_2023_1748} that $e^f=g$.
\end{proof}

\noindent For a topological space $X$, let  $H^1(X,\mZ)$ denote the first \v{C}ech cohomology group of $X$ with integer coefficients. For background on \v{C}ech cohomology, see \cite{EilSte}. For a   commutative unital complex 
semisimple Banach algebra $A$, the quotient group $A^{-1}/e^{A}$ is isomorphic to $H^1(M(A), \mZ)$ (see e.g. \cite[Corollary~7.4]{Gam}). Thus $H^1(M(\calA(\bbp)), \mZ)=\{0\}$.

\section{Solvability of $Ax=b$}

\noindent We will show the following:

\begin{theorem}
\label{4320231947}
Let $A \in \calA(\bbp)^{m\times n},$ $b \in  \calA(\bbp)^{m\times  1}$.  
\phantom{Then the following }
Then the following  are equivalent:
\begin{enumerate}
 \item There exists an $x  \in \calA(\bbp)^{n\times 1}$ such that $A\ast x = b$.
 \item There exists a $\delta>0$ such that for all $k\in \mN_0$ and all $y \in \mC^{m\times 1},$ $\nm (\widehat{A}(k))^* y\nm_2 \geq  \delta |\langle y, \widehat{b}(k)\rangle_2|$.
 \end{enumerate}
\end{theorem}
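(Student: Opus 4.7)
The plan is to convert the equation $A \ast x = b$ into a family of pointwise linear systems on $\mC^n$ indexed by the Taylor variable $k$, and then solve them via a finite-dimensional duality that transfers hypothesis (2) into a uniform bound on the minimum-norm solutions. By the definition of the weighted Hadamard product, the $k$-th Taylor coefficient of $A \ast x$ equals $\bbp(k)\,\widehat{A}(k)\,\widehat{x}(k) \in \mC^{m\times 1}$, so $A \ast x = b$ is equivalent to $\widehat{A}(k)\, v(k) = \widehat{b}(k)$ for every $k \in \mN_0$, where $v(k) := \bbp(k)\,\widehat{x}(k) \in \mC^{n\times 1}$. Moreover, $x \in \calA(\bbp)^{n\times 1}$ if and only if the sequence $(v(k))_{k\geq 0}$ is uniformly bounded in $\mC^{n\times 1}$, since each $x_j \in \calA(\bbp)$ just means $\sup_k \bbp(k) |\widehat{x}_j(k)| < \infty$. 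Thus the theorem reduces to the assertion that a uniformly bounded sequence $(v(k))_k$ solving the family $\widehat{A}(k) v(k) = \widehat{b}(k)$ exists if and only if (2) holds.

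The implication $(1) \Rightarrow (2)$ I would get immediately from the Cauchy--Schwarz inequality. Given $x$ satisfying (1), set $M := \sup_k \|v(k)\|_2 < \infty$; then for every $k \in \mN_0$ and every $y \in \mC^{m\times 1}$,
$$|\langle y, \widehat{b}(k)\rangle_2| \;=\; |\langle (\widehat{A}(k))^* y,\, v(k)\rangle_2| \;\leq\; M\, \|(\widehat{A}(k))^* y\|_2,$$
which yields (2) with $\delta = 1/M$.

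The substantive direction is $(2) \Rightarrow (1)$, and for this I would invoke the following finite-dimensional duality lemma: for any $C \in \mC^{m\times n}$, $d \in \mC^{m\times 1}$ and $R > 0$, the inequality $|\langle y, d\rangle_2| \leq R \|C^* y\|_2$ holding for all $y \in \mC^{m\times 1}$ is equivalent to the existence of $v \in \mC^{n\times 1}$ with $Cv = d$ and $\|v\|_2 \leq R$. The nontrivial implication runs as follows: the hypothesis forces $\langle y, d\rangle_2 = 0$ whenever $C^* y = 0$, so $d \in \ker(C^*)^\perp = \mathrm{range}(C)$; the minimum-norm solution $v^*$ to $Cv = d$ then lies in $\ker(C)^\perp = \mathrm{range}(C^*)$, say $v^* = C^* w$ for some $w \in \mC^m$, whence
$$\|v^*\|_2^2 \;=\; \langle v^*, C^* w\rangle_2 \;=\; \langle Cv^*, w\rangle_2 \;=\; \langle d, w\rangle_2 \;\leq\; R\, \|C^* w\|_2 \;=\; R\, \|v^*\|_2,$$
giving $\|v^*\|_2 \leq R$.

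Applying this lemma pointwise with $C = \widehat{A}(k)$, $d = \widehat{b}(k)$, $R = 1/\delta$ produces, for each $k$, a vector $v(k) \in \mC^{n\times 1}$ with $\widehat{A}(k) v(k) = \widehat{b}(k)$ and $\|v(k)\|_2 \leq 1/\delta$. Setting $\widehat{x}(k) := v(k)/\bbp(k)$ gives $\bbp(k) |\widehat{x}_j(k)| \leq 1/\delta$ uniformly in $k$ and $j$, so each $x_j \in \calA(\bbp)$, and by construction $A \ast x = b$. The main obstacle is the duality lemma, but it is a routine minimum-norm/pseudoinverse argument in finite dimensions; once it is in place, the passage from the pointwise solutions to the Banach-algebra solution is essentially free, because the uniform bound on $\|v(k)\|_2$ is precisely the condition for $x$ to lie in $\calA(\bbp)^{n\times 1}$. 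No approximation or corona-style functional-analytic machinery is needed, since everything is finite-dimensional at each Taylor index and the pointwise selections patch together automatically.
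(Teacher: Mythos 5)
Your proposal is correct and follows essentially the same route as the paper: Cauchy--Schwarz for $(1)\Rightarrow(2)$, and a pointwise finite-dimensional solvability-with-norm-bound lemma for $(2)\Rightarrow(1)$, followed by dividing through by $\bbp(k)$ to reassemble an element of $\calA(\bbp)^{n\times 1}$. The paper's Lemma~\ref{4_3_2023_1958} is exactly your duality lemma (with $R=1/\delta$); the paper constructs the solution as $x=A^*y_0$ with $AA^*y_0=b$, which is the minimum-norm solution you describe, so the two arguments are the same.
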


\vspace{-0.2cm}

\noindent Here $\langle \cdot,\cdot \rangle_2$ denotes the usual Euclidean inner product on $\mC^{\ell \times 1}$ for $\ell\in \mN$, and $\nm \cdot\nm_2$ is the corresponding induced norm. Also, if $a_{ij}\in \calA(\bbp)$ denotes the entry in the $i^{\textrm{th}}$ row and $j^{\textrm{th}}$ column of $A$, then $\widehat{A}(k)\in \mC^{m\times n}$ is the matrix whose entry in the $i^{\textrm{th}}$ row and $j^{\textrm{th}}$ column is $\widehat{a_{ij}}(k)$, $1\leq i \leq m$, $1\leq j\leq n$, $k\in \mN_0$. For a matrix $M\in \mC^{m\times n}$, $M^*$ denotes its Hermitian adjoint (obtained by taking the entrywise complex conjugate of $M$ and then taking the transpose of the resulting matrix). We will use the following elementary linear algebraic result; see \cite[Lemma~8.2]{Sas}. We include the short proof for the sake of completeness. 

\begin{lemma}
\label{4_3_2023_1958}
 Let $ A\in \mC^{m\times n}$ and $b\in \mC^{m\times 1}$ be such that 
$$
 \exists \delta>0 \textrm{ such that  }
 \textrm{ for all }y \in \mC^{m\times 1},\, 
  \nm A^\ast y\nm_2 \geq  \delta | \langle y,b \rangle_2 |.\quad \quad  (\star)
  $$
   Then there exists an $x \in \mC^{n\times 1}$ such that $Ax = b$ with $\nm x\nm_2 \leq  \frac{1}{\delta}$.
\end{lemma}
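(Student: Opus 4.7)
The plan is to apply the hypothesis $(\star)$ twice: first with $y$ ranging over $\ker A^\ast$ to force $b\in\mathrm{range}(A)$, and then with a carefully chosen $y$ to bound the norm of the minimum-norm solution. This is the standard pseudoinverse-style trick.

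First I would test $(\star)$ on $y\in\ker A^\ast$. Then $\nm A^\ast y\nm_2=0$, so $(\star)$ forces $\langle y,b\rangle_2=0$; hence $b\perp\ker A^\ast$. Since $\mC^{m\times 1}=\ker A^\ast\oplus\mathrm{range}(A)$ orthogonally, we obtain $b\in\mathrm{range}(A)$, so some $x_0\in\mC^{n\times 1}$ satisfies $Ax_0=b$. Using the orthogonal decomposition $\mC^{n\times 1}=\ker A\oplus\mathrm{range}(A^\ast)$, replace $x_0$ by its projection $x\in\mathrm{range}(A^\ast)$ (the minimum-norm solution); it still satisfies $Ax=b$, and it is this $x$ that I propose as the answer.

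The key step is the norm bound. Writing $x=A^\ast z$ for some $z\in\mC^{m\times 1}$ (possible because $x\in\mathrm{range}(A^\ast)$), a short calculation gives
$$
\nm x\nm_2^{2}=\langle A^\ast z,A^\ast z\rangle_2=\langle z,AA^\ast z\rangle_2=\langle z,Ax\rangle_2=\langle z,b\rangle_2,
$$
so $\langle z,b\rangle_2=\nm x\nm_2^{2}$ is a nonnegative real number. Feeding $y:=z$ into $(\star)$ then yields
$$
\nm x\nm_2=\nm A^\ast z\nm_2\geq \delta\,|\langle z,b\rangle_2|=\delta\,\nm x\nm_2^{2},
$$
from which $\nm x\nm_2\leq 1/\delta$ follows. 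The case $x=0$ forces $b=Ax=0$ and is trivially compatible with the bound.

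There is no real obstacle; the only conceptual step is recognising that the ``correct'' $x$ to test is the one lying in $\mathrm{range}(A^\ast)$, because this is precisely what allows us to write $x=A^\ast z$ and convert the $\langle y,b\rangle_2$ on the right-hand side of $(\star)$ into $\nm x\nm_2^{2}$, yielding a quadratic inequality in $\nm x\nm_2$ from which the estimate falls out.
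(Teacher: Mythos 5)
Your proof is correct and follows essentially the same route as the paper's: both first use $(\star)$ on $\ker A^\ast$ to deduce $b\in\mathrm{range}(A)$, both produce a solution of the form $x=A^\ast z$, and both close with the identical computation $\nm x\nm_2^2=\langle z,b\rangle_2\leq \nm A^\ast z\nm_2/\delta=\nm x\nm_2/\delta$. The only cosmetic difference is that the paper arrives at $x=A^\ast y_0$ by directly showing $b\in\mathrm{ran}(AA^\ast)$ (via $\ker(AA^\ast)=\ker A^\ast$), whereas you take an arbitrary solution and project it onto $\mathrm{range}(A^\ast)$; these amount to the same thing.
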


\vspace{-0.6cm}

\begin{proof}
 For $y\in \ker A^\ast$,  ($\star$) implies $\langle y, b\rangle_2 =0$. So  $b\in(\ker A^\ast)^\perp = \textrm{ran}\;\!A$. If $y \in \ker(AA^\ast)$, 
 then 
 $
 \nm A^\ast y\nm_2^2 \!=\! \langle A^\ast y, A^\ast y \rangle_2  
 \!=\! \langle AA^\ast y,y \rangle_2  \!=\! \langle 0,y \rangle_2  \!=\! 0.
 $  
 Thus $A^\ast y = 0$, and so $y \in \ker A^\ast = (\textrm{ran}\;\!A)^\perp$. 
 Since we had shown above that $b \in \textrm{ran}\;\!A$, we have 
 $\langle b,y \rangle_2  = 0$. As $y \in \ker(AA^\ast)$ 
 was arbitrary,  $b \in (\ker (AA^\ast) )^\perp = \textrm{ran}((AA^\ast )^\ast) = \textrm{ran}(AA^\ast )$. Hence there exists a
$y_0 \in\mC^{m\times 1}$ such that $AA^\ast y_0 =b$. 
Taking $x:=A^\ast y_0 \in \mC^{n\times 1}$, we have $Ax=b$. 
If $b = 0$, then we can take $x = 0$, and the estimate on $\nm x\nm_2$ 
is obvious. So we assume that $b\neq 0$ and then $A^\ast y_0\neq 0$ (since we know that $AA^\ast y_0 =b$). 
We have, using the given inequality ($\star$), 
$$
\nm A^\ast y_0\nm_2^2 \!=\! \langle A^\ast y_0,A^\ast y_0 \rangle_2  
\!=\! \langle y_0,AA^\ast y_0 \rangle_2  \!=\! \langle y_0,b \rangle_2  
\!=\! |\langle y_0,b \rangle_2 | \!\leq\!  \frac{\nm A^\ast y_0\nm_2}{\delta} .
$$ 
Since $A^\ast y_0\neq 0$, we obtain $\nm x \nm_2 = \nm A^\ast y_0\nm_2 \leq \frac{1}{\delta}$.
\end{proof}

\begin{proof}[Proof of Theorem~\ref{4320231947}] (1)$\Rightarrow$(2): 
As $x\in \calA(\bbp)^{n\times 1}$, there exists a $C>0$ such that for all $k\in \mN_0$, $\nm \widehat{x}(k)\nm_2 \leq \frac{C}{\bbp(k)}$. So for all $y\in \mC^{m\times 1}$ and $k\in \mN_0$, 
$$
\begin{array}{rcl}
|\langle y ,\widehat{b}(k)\rangle_2 | 
\!\!\!&=&\!\!\!
|\langle y , \bbp(k) \widehat{A}(k) \widehat{x}(k)\rangle_2|
=\bbp(k) |\langle (\widehat{A}(k))^*y, \widehat{x}(k)\rangle_2|
\\[0.1cm]
\!\!\!&\leq &\!\!\! \bbp(k) \nm (\widehat{A}(k))^*y\nm_2 \frac{C}{\bbp(k)} \quad \textrm{(Cauchy-Schwarz)}\\[0.1cm]
\!\!\!&=&\!\!\! 
C\nm (\widehat{A}(k))^*y\nm_2 .
\end{array}
$$
Setting $\delta := \frac{1}{C}> 0$ and rearranging gives (2).

\noindent (2)$\Rightarrow$(1): Fix $k\in \mN_0$. The condition in statement (2) and Lemma~\ref{4_3_2023_1958}
   implies the existence of an $ x_k \in \mC^{n\times 1}$ 
   such that $\widehat{A}(k)x_k = \widehat{b}(k)$, with 
   $\nm x_k \nm_2 \leq \frac{1}{\delta}$. In this way, we obtain a sequence $(x_k)_{k\geq 0}$ in $\mC^{n\times 1}$. Let the components  of $x_k$ be denoted by    $x_k^{(1)},\cdots, x_k^{(n)}\in \mC$. Define 
   $$
   \begin{array}{c}
   x^{(i)}(z)=\sum\limits_{n=0}^\infty \frac{x^{(i)}_k}{\bbp(k)}z^k\,\,
   \textrm{ for all }z\in \mC,\, 1\leq i\leq n.
   \end{array}
   $$ 
   Then each $x^{(i)}\in \calA(\bbp)$ (as $|\frac{x^{(i)}}{\bbp(k)}|\leq \frac{1/\delta}{\bbp(k)}$ for all $k\in \mN_0$), and so the column vector $x$ having these components belongs to $\calA(\bbp)^{n\times 1}$. Also,   
   $$
   \begin{array}{c}
\reallywidehat{(A\ast x)}(k)=\bbp(k) \widehat{A}(k) \frac{1}{\bbp(k)} x_k
   =\widehat{b}(k)\textrm{ for all }k\in \mN_0,
   \end{array}
   $$
    i.e., $A\ast x=b$. 
\end{proof}

\vspace{-0.3cm}

\section{Krull dimension of $\calA(\bbp)$ is infinite}

\vspace{-0.1cm}

\begin{definition}
The {\em Krull dimension} of a commutative ring $R$ is the 
supremum of the lengths of chains of distinct proper prime ideals of
$R$.
\end{definition}

\vspace{-0.1cm}

\noindent Recall that the Hardy algebra $H^\infty$ is the Banach algebra of bounded and holomorphic functions on the unit disc $\mD:=\{z\in \mC: |z|<1\}$, with pointwise operations and the supremum norm $\|\cdot\|_\infty$. In  \cite{vonRen77}, von~Renteln showed that the Krull dimension of 
$H^\infty$ is infinite. We adapt the idea given in \cite{vonRen77}, 
to show that the Krull dimension of $\calA(\bbp)$ is infinite too. 
A key ingredient of the proof in \cite{vonRen77} was the use of a canonical factorisation of $H^\infty$ elements used to create ideals with zeroes at prescribed locations with prescribed orders. Instead of zeroes of our entire functions, we will look at the indices for the vanishing coefficients in the Taylor expansion centred at $0$, and instead of orders of zeroes, we will use the notion of `index-order' introduced below. 

If $f\in \calA(\bbp)$ and $k\in \mN_0$  is an index such that $\widehat{f}(k)=0$, then we define the {\em index-order $m(f,k)$ of the index $k$ for $f$}  by 
$$
m(f,k)=\max\{m \in \mN_0: \widehat{f}(k+\ell)=0 \textrm{ whenever } 0\leq \ell \leq m-1\}.
$$
If $\widehat{f}(k+\ell)=0$ for all $\ell \in \mN_0$, then we set $m(f,k)=\infty$. If $\widehat{f}(k)\neq 0$, then we set $m(f,k)=0$. 
Analogous to the order of a zero of a holomorphic function, the index-order satisfies the following property. 
$$ 
\textrm{(P1): If }f,g\in \calA(\bbp),\,k\in \mN_0\textrm{, then }m(f+g,k)\geq \min\{m(f,k), m(g,k)\}.
$$
The order of a zero $\zeta$ of the pointwise product of two holomorphic functions is the sum of the orders of $\zeta$ as a zero of each of of the two holomorphic functions. For the index order, and the weighted Hadamard product, we have the following instead:
$$ 
\textrm{(P2): If }f,g\in \calA(\bbp),\,k\in \mN_0\textrm{, then }m(f\ast g,k)\geq \max\{m(f,k), m(g,k)\}.
$$
We will use the following known result; see \cite[Theorem, \S 0.16, p.6]{GilJer60}.

\begin{proposition}
\label{thm_GilmanJerison}
If $J$ is an ideal in a ring $R,$ and $M$ is a set that is closed
under multiplication and $M\cap J=\emptyset$, then there exists an
ideal $P$ such that $J\subset P$ and $P\cap M=\emptyset,$ and $P$
maximal with respect to these properties. Moreover$,$ such an ideal $P$
is necessarily prime.
\end{proposition}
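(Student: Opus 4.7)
My plan is to prove this via Zorn's lemma for existence of $P$, followed by a direct contradiction argument using the multiplicative closure of $M$ for primality.

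\textbf{Step 1 (Existence via Zorn's lemma).} I would consider the family
\[
\calF=\{I\subseteq R : I \text{ is an ideal},\ J\subseteq I,\ I\cap M=\emptyset\},
\]
partially ordered by inclusion. The family is nonempty since $J\in\calF$. For any chain $\{I_\alpha\}_{\alpha\in\Lambda}$ in $\calF$, the union $\bigcup_\alpha I_\alpha$ is again an ideal (a standard fact for chains of ideals), clearly contains $J$, and is disjoint from $M$ because each $I_\alpha$ is. Thus every chain has an upper bound in $\calF$, and Zorn's lemma yields a maximal element $P\in\calF$, giving the required ideal with $J\subseteq P$ and $P\cap M=\emptyset$.

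\textbf{Step 2 (Primality of $P$).} I would argue by contradiction: suppose $a,b\in R$ satisfy $ab\in P$ but $a\notin P$ and $b\notin P$. Then the ideals $P+(a)$ and $P+(b)$ strictly contain $P$, so by the maximality of $P$ in $\calF$ neither of them lies in $\calF$; since both still contain $J$, the only way they fail to be in $\calF$ is that they meet $M$. Pick $m_1=p_1+r_1 a\in M$ and $m_2=p_2+r_2 b\in M$ with $p_1,p_2\in P$ and $r_1,r_2\in R$. Expanding,
\[
m_1 m_2 = p_1 p_2 + p_1 r_2 b + r_1 a\, p_2 + r_1 r_2\, ab.
\]
The first three summands lie in $P$ (being multiples of $p_1$ or $p_2$), and the last summand lies in $P$ by our hypothesis $ab\in P$. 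Hence $m_1 m_2\in P$. But $M$ is closed under multiplication, so $m_1 m_2\in M$, contradicting $P\cap M=\emptyset$. Therefore $ab\in P$ forces $a\in P$ or $b\in P$, i.e., $P$ is prime.

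\textbf{Expected obstacle.} The argument is essentially formal once one sees the right Zorn setup, so there is no serious technical hurdle. The one subtle point worth double-checking is the primality step: one must be sure the ideals $P+(a)$ and $P+(b)$ are \emph{strictly} larger than $P$ (which is where $a,b\notin P$ is used) so that the maximality of $P$ in $\calF$ can be invoked to force them to meet $M$. If the ambient ring is not assumed to have a unit, one must interpret $(a)$ as $Ra+\mZ a$ and adjust the expansion accordingly; for the application in this paper the ring $\calA(\bbp)$ is commutative and unital, so this technicality does not arise.
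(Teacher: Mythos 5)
The paper does not actually prove this proposition; it is stated as a known result with a citation to Gillman and Jerison's \emph{Rings of Continuous Functions}, \S 0.16. Your proof is the standard argument (essentially the one in that reference) and it is correct: Zorn's lemma applied to the poset of ideals containing $J$ and missing $M$, followed by the classical contradiction using the multiplicative closure of $M$ to show a maximal such ideal is prime. One small point worth flagging for completeness: to conclude that $P$ is \emph{prime} you also need $P$ to be a proper ideal, which holds as soon as $M\neq\emptyset$ (since then $P=R$ would meet $M$). If $M=\emptyset$ the maximal element is $R$ itself, which is not prime, so the proposition implicitly assumes $M\neq\emptyset$; in the paper's application $\varepsilon\in M_n$, so this is satisfied. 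Otherwise your write-up is clean and your ``expected obstacle'' remark correctly identifies the only delicate point, namely that $a,b\notin P$ is what makes $P+(a)$ and $P+(b)$ strictly larger, so maximality applies.
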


\begin{theorem}
\label{theorem_Krull}
The Krull dimension of $\calA(\bbp)$ is infinite.
\end{theorem}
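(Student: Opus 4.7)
The plan is to follow von~Renteln's strategy from \cite{vonRen77}, via the two parallels flagged in the preceding paragraph: indices of vanishing Taylor coefficients play the role of zeros of holomorphic functions, and the index-order $m(f,\cdot)$ plays the role of multiplicity. The engine is Proposition~\ref{thm_GilmanJerison}, applied to a carefully arranged nested sequence of ideal/multiplicative-set pairs.

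First I would note that $\calA(\bbp)$ admits an abundance of analogues of ``canonical factorisations with prescribed zeros of prescribed orders''. Fix a strictly increasing sequence $n_1 < n_2 < \cdots$ in $\mN_0$ with rapidly growing gaps; for any sequence $\bbr = (r_j)_{j \geq 1}$ of positive integers with $r_j + 1 \leq n_{j+1}-n_j$, the function $g_\bbr$ given by $\widehat{g_\bbr}(k) = 0$ for $k \in \bigcup_{j\geq 1} [n_j, n_j+r_j]$ and $\widehat{g_\bbr}(k) = 1/\bbp(k)$ otherwise lies in $\calA(\bbp)$ (as $\bbp(k)|\widehat{g_\bbr}(k)|\le 1$) and satisfies $m(g_\bbr, n_j) = r_j+1$. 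Now choose a sequence of growth schedules $\bbr^{(1)} \prec \bbr^{(2)} \prec \cdots$ with $r^{(s+1)}_j/r^{(s)}_j \to \infty$ as $j \to \infty$ (for instance $r^{(s)}_j = j^s$, once the gaps $n_{j+1}-n_j$ dominate), and associated markers $\ell^{(s)}_j := n_j + r^{(s)}_j$.

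For each $s \in \mN$ set
\begin{align*}
J_s & := \{f \in \calA(\bbp) : m(f, n_j) \geq r^{(s)}_j+1 \text{ for all sufficiently large } j\},\\
M_s & := \{f \in \calA(\bbp) : \widehat{f}(\ell^{(t)}_j) \neq 0 \text{ for every } t \leq s \text{ and all sufficiently large } j\}.
\end{align*}
By (P1) together with the fact that $\widehat{f\ast h}(k) = 0$ whenever $\widehat{f}(k)=0$, each $J_s$ is an ideal; by the product formula $\widehat{f\ast g}(k)=\bbp(k)\widehat{f}(k)\widehat{g}(k)$, each $M_s$ is closed under $\ast$, contains $\varepsilon$, and $M_s \supseteq M_{s+1}$. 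Also $J_s \supseteq J_{s+1}$ (faster vanishing is a stronger condition). We have $J_s \cap M_s = \emptyset$ because any $f \in J_s$ vanishes at $\ell^{(s)}_j = n_j + r^{(s)}_j$ (inside the zero block $[n_j, n_j+r^{(s)}_j]$) for large $j$, contradicting membership in $M_s$.

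Invoking Proposition~\ref{thm_GilmanJerison} iteratively—at each step $s{+}1$ extending the prime $P_s$ (which already contains $J_{s+1}\subseteq J_s$ and is disjoint from $M_{s+1}\subseteq M_s$) via Zorn's lemma to a prime $P_{s+1}$ maximal among ideals containing $J_{s+1}$ and disjoint from $M_{s+1}$—yields an ascending chain $P_1 \subseteq P_2 \subseteq \cdots$ of prime ideals of $\calA(\bbp)$. Strictness $P_s \subsetneq P_{s+1}$ is obtained by exhibiting, at each step, a witness constructed from the $g_\bbr$-template with an intermediate schedule between $\bbr^{(s)}$ and $\bbr^{(s+1)}$: it can be adjoined to $P_{s+1}$ without compromising disjointness from $M_{s+1}$, yet is non-vanishing at every marker $\ell^{(t)}_j$ for $t \leq s$ and large $j$, and thus lies in $M_s$, hence outside $P_s$. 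The main obstacle is orchestrating this interlacing of zero blocks and markers: property (P2) gives only $\geq\max$ rather than $=\sum$ as in the $H^\infty$ setting, so the multiplicative sets $M_s$ must be defined by non-vanishing at prescribed indices (as above) rather than by upper bounds on index-order, and the schedules $\bbr^{(s)}$ together with the marker positions $\ell_j^{(s)}$ must be chosen so the witness at each stage is simultaneously admissible for the new prime and excluded from the previous one. Once an infinite strictly ascending chain of primes is produced, the Krull dimension is infinite.
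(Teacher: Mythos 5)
Your overall plan is the right one and matches the paper's: use the index-order $m(f,\cdot)$ as a proxy for multiplicity, build a nested family of ideal/multiplicative-set pairs, and extract a chain of primes from Proposition~\ref{thm_GilmanJerison}. Defining the multiplicative sets $M_s$ by non-vanishing at prescribed markers is a reasonable variant of the paper's $M_n$, and multiplicative closure under $\ast$ is then immediate from $\widehat{f\ast g}(k)=\bbp(k)\widehat{f}(k)\widehat{g}(k)$.

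However, the strictness step $P_s\subsetneq P_{s+1}$ breaks down, for two reasons. (i) A witness $w$ built from the $g_{\bbr}$-template with an ``intermediate schedule'' $r^{(s)}_j<q_j<r^{(s+1)}_j$ vanishes on $[n_j,n_j+q_j]\supseteq[n_j,\ell^{(s)}_j]$, hence $\widehat{w}(\ell^{(s)}_j)=0$ for all large $j$; by your own definition that gives $w\notin M_s$, contradicting the property you want so that $w\notin P_s$. If instead $q_j<r^{(1)}_j$, then $w$ misses every marker $\ell^{(t)}_j$ with $t\le s+1$, so $w\in M_{s+1}$, which is disjoint from $P_{s+1}$ --- the witness is then excluded from $P_{s+1}$ rather than forced into it. No schedule places $w$ in $M_s\setminus M_{s+1}$, because the zero block is anchored at $n_j$ and the markers $\ell^{(1)}_j<\cdots<\ell^{(s+1)}_j$ lie consecutively inside it. (ii) Even granting a suitable $w$, your argument for $w\in P_{s+1}$ rests on the assertion that $w$ ``can be adjoined to $P_{s+1}$ without compromising disjointness from $M_{s+1}$''; to invoke maximality you would actually need $(P_{s+1}+\langle w\rangle)\cap M_{s+1}=\emptyset$, which is precisely what requires proof and is not obviously true for a prime $P_{s+1}$ over which you have no control beyond its seed and maximality. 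The paper sidesteps both obstacles by seeding the new prime with the witness before applying the proposition: it takes $J=I_N+P_{N+1}$ and $M=M_N$, so the explicit $f_N\in I_N$ lies in $P_N$ by construction, while $f_N\in M_{N+1}$ forces $f_N\notin P_{N+1}$, and the only real work is the (P1)-based verification that $(I_N+P_{N+1})\cap M_N=\emptyset$. If you want to run your chain in the ascending direction, you need to do the analogue: choose $w\in M_s$ first, set $J=P_s+\langle w\rangle$, verify directly that this ideal is disjoint from $M_{s+1}$, and only then extract $P_{s+1}$.
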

\begin{proof} 
Let $a_{k}=2^k$ for all $k\in \mN_0$. Let $n\in \mN$. Define $f_n\in \calA(\bbp)$ by 
$$
\left\{
\begin{array}{rcll}
\widehat{f_n}(a_k+\ell)\!\!\!&=&\!\!\!0 &\textrm{ whenever }0\leq \ell \leq k^{n+1},\\[0.1cm]
\widehat{f_n}(m) \!\!\!&=&\!\!\!\frac{1}{\bbp(m)}& \textrm{ if } m\not\in\bigcup_{k\in \mN_0} \{ a_k+\ell:  0\leq \ell \leq k^{n+1}\}.
\end{array}
\right. 
$$
Note that $m(f_n,a_k)\geq k^{n+1}$, but 
for each fixed $n\in \mN$, there exists a $K_n\in \mN_0$ such that  the gap between the indices, 
$$
a_{k+1}-a_k=2^{k+1}-2^k =2^k >k^{n+1}\textrm{ for all }k>K_n,
$$
 and so $m(f_n,a_k)=k^{n+1}$ for all  $k>K_n$. 
Hence 
\begin{equation}
\label{function_family}
\begin{array}{c}
\lim\limits_{k\rightarrow \infty} \frac{m(f_{n},a_{k})}{k^{n}}=\infty
\quad \textrm{ and } \quad
\lim\limits_{k\rightarrow \infty} \frac{m(f_{n},a_{k})}{k^{n+1}}=1<\infty.
\end{array}
\end{equation}
Let  $
I:=\{f\in \calA(\bbp) : \exists k_{0}(f) \in \mN_0 \textrm{ such that }
\forall k>k_{0}(f), \; \widehat{f}(a_{k})=0\}.
$ 
The set $I$ is nonempty since $0\in I$. Clearly $I$ is closed under
addition, and $f\ast g\in I$ whenever $f\in I$ and $g\in \calA(\bbp)$. So $I$ is
an ideal of $\calA(\bbp)$. For $n\in \mN$, we define
$$
\begin{array}{rcl}
I_{n}\!\!\!&=&\!\!\!
\{ f\in I : \lim\limits_{k\rightarrow \infty}
\frac{m(f,a_{k})}{k^{n}}=\infty\},\\
M_{n}\!\!\!&=&\!\!\!
\{ f\in \calA(\bbp): \sup\limits_{k\in \mN}
\frac{m(f,a_{k})}{k^{n}}<\infty\}. 
\end{array}
$$
Clearly $f_{n} \in I_{n}$, and so $I_{n}$ is not empty. Using
(P1), we see that if $f,g\in I_{n}$, then $f+g\in I_{n}$.
If $g\in\calA(\bbp)$ and $f\in I_n$, then (P2) impies that $f\ast g\in I_{n}$. Hence $I_{n}$ is an ideal of $\calA(\bbp)$.

The identity element $\varepsilon \in M_{n}$  for all $n\in \mN$. If $f,g \in 
M_{n}$, then it follows from (P2) that $f\ast g\in M_{n}$. Thus
$M_{n}$ is a nonempty multiplicatively closed subset of $\calA(\bbp)$.

It is easy to check that for all $ n\in \mN$, $I_{n+1}\subset I_{n}$  and $M_{n}\subset M_{n+1}$.  
We now prove that the inclusions are strict for each $n\in \mN$. From
\eqref{function_family}, it follows that $f_{n} \in I_{n}$ but $f_{n}
\not\in I_{n+1}$. Also $f_{n} \in M_{n+1}$ and $f_{n} \not\in M_{n}$.

Next we show that $I_{n} \cap M_{n} =\emptyset$. Indeed, if $f\in
I_{n} \cap M_{n}$, then
$$
\begin{array}{c}
\infty
= \lim\limits_{k\rightarrow \infty} \frac{m(f,a_{k})}{k^{n}}
= \limsup\limits_{k\rightarrow \infty} \frac{m(f,a_{k})}{k^{n}}
\leq \sup\limits_{k\in \mN} \frac{m(f,a_{k})}{k^{n}}
< \infty,
\end{array}
$$
a contradiction. But $I_{n} \cap M_{n+1} \neq
\emptyset$, since $f_{n} \in I_{n} $ and $f_n\in M_{n+1}$.

We will now show that the Krull dimension of $\calA(\bbp)$ is infinite by showing that 
for all $N\in \mN$, we can construct a chain of strictly decreasing prime ideals 
 $
P_{N+1}\subsetneq P_{N}\subset \cdots \subsetneq P_2 \subsetneq P_1
$ in $\calA(\bbp)$.

Fix an $N\in \mN$. Applying Proposition~\ref{thm_GilmanJerison}, taking $J=I_{N+1}$ and
$M=M_{N+1}$, we obtain the existence of a prime ideal $P=P_{N+1}$ in
$\calA(\bbp)$, which satisfies $I_{N+1}
\subset P_{N+1}$ and $P_{N+1} \cap M_{N+1} =\emptyset$.

We claim the ideal $I_{N}+P_{N+1}$ of $\calA(\bbp)$ satisfies $(I_{N} +P_{N+1})
\cap M_{N}=\emptyset$. Let $h=f+g \in I_{N}+P_{N+1}$, where $f\in
I_{N}$ and $g\in P_{N+1}$. Since $g\in P_{N+1}$, by the construction
of $P_{N+1}$ it follows that $g\not\in M_{N+1}$. But $M_{N} \subset
M_{N+1}$, and so $g\not\in M_{N}$ as well. Thus there exists a
subsequence $(k_{\ell})_{\ell\in \mN_0}$ of $(k)_{k\in \mN_0}$ such that
$
\lim_{\ell\rightarrow \infty} \frac{m(g, a_{k_{\ell}})}{k_{\ell}^{N}}=\infty.$
From (P1), we obtain $\frac{m(h, a_{k_{\ell}})}{k_{\ell}^{N}}
\geq
\min \{
\frac{m(f, a_{k_{\ell}})}{k_{\ell}^{N}},
 \frac{m(g, a_{k_{\ell}})}{k_{\ell}^{N}}\}$. As $f\in I_N$,  it follows that 
$$
\begin{array}{c}
\sup\limits_{\ell\in \mN} \frac{m(h, a_{k_{\ell}})}{k_{\ell}^{N}}
\geq 
\min \Big\{
\limsup\limits_{\ell \rightarrow \infty} \frac{m(f, a_{k_{\ell}})}{k_{\ell}^{N}},
\limsup\limits_{\ell\rightarrow \infty} \frac{m(g, a_{k_{\ell}})}{k_{\ell}^{N}}
\Big\}
\geq \infty.
\end{array}
$$
Thus $h\not\in M_{N}$. Consequently, $(I_{N} +P_{N+1}) \cap M_{N}=\emptyset$.

Clearly $I_{N} \subset I_{N}+P_{N+1}$. 
Applying Proposition~\ref{thm_GilmanJerison} again, now taking $J=I_{N}+P_{N+1}$ and
$M=M_{N}$, we obtain the existence of a prime ideal $P=P_{N}$ in
$\calA(\bbp)$ such that $I_{N}+P_{N+1} \subset P_{N}$ and $P_{N} \cap M_{N} =\emptyset$. 
Thus $P_{N+1}\subset I_N+ P_{N+1}\subset P_N$.
The first inclusion is strict because $f_N\in I_N\subset I_N+P_{N+1}$. But $f_N\not\in P_{N+1}$ (since $f_N\in M_{N+1}$ and 
$P_{N+1}\cap M_{N+1}
=\emptyset$ by the construction of $P_{N+1}$). Thus $P_{N+1}\subsetneq P_N$. 

Now consider the ideal $J:=I_{N-1}+P_N\supset I_{N-1}$ of $\calA(\bbp)$ and the multiplicatively closed set $M:=M_{N-1}$ of $\calA(\bbp)$. Similar to the argument given above, then\footnote{Let $h=f+g \in I_{N-1}+P_{N}$, where $f\in
I_{N-1}$ and $g\in P_{N}$. Since $g\in P_{N}$, by the construction
of $P_{N}$, $g\not\in M_{N}\supset M_{N-1}$, and so $g\not\in M_{N-1}$. Thus there exists a subsequence $(k_{\ell})_{\ell\in \mN_0}$ of $(k)_{k\in \mN_0}$ such that
$\lim\limits_{\ell\rightarrow \infty} \frac{m(g, a_{k_{\ell}})}{k_{\ell}^{N-1}}=\infty$. 
As $f\in I_{N-1}$,  
$
\sup\limits_{\ell\in \mN} \frac{m(h, a_{k_{\ell}})}{k_{\ell}^{N-1}}
\geq 
\min \{
\limsup\limits_{\ell \rightarrow \infty} \frac{m(f, a_{k_{\ell}})}{k_{\ell}^{N-1}},
\limsup\limits_{\ell\rightarrow \infty} \frac{m(g, a_{k_{\ell}})}{k_{\ell}^{N-1}}
\}
\geq \infty.
$
Thus $h\not\in M_{N-1}$. So $(I_{N-1} +P_{N}) \cap M_{N-1}=\emptyset$.} 
$J\cap M=(I_{N-1}+P_N)\cap M_{N-1}=\emptyset$. 
By Proposition~\ref{thm_GilmanJerison}, taking $J=I_{N-1}+P_N\supset I_{N-1}$  and $M=M_{N-1}$, there exists a prime ideal $P=P_{N-1}$ in $\calA(\bbp)$ such  that 
$I_{N-1}+P_{N} \subset P_{N-1}$ and $P_{N-1} \cap M_{N-1} =\emptyset$. 
Thus $P_{N}\subset I_{N-1}+ P_{N}\subset P_{N-1}$, and again the first inclusion is strict (because $f_{N-1}\in I_{N-1}\subset I_{N-1}+P_N$, $f_{N-1}\in M_N$ and $M_N\cap P_N=\emptyset$). 

Proceeding in this manner, we obtain the chain of distinct prime ideals
 $
P_{N+1} \subsetneq P_{N} \subsetneq P_{N-1} \subsetneq \cdots \subsetneq P_1.
$ in $\calA(\bbp)$. 
As  $N\in \mN$ was arbitrary, it follows that the Krull dimension of $\calA(\bbp)$ is infinite.
\end{proof}

\section{$\calA(\bbp)$ is neither Artinian nor Noetherian}

\vspace{-0.2cm}

\noindent Even Noetherian rings can have an infinite Krull dimension (see e.g. \cite[Appendix, Example~E1]{Nag} or \cite[Exercise~9.6]{Eis}). 
However, in our case,  $\calA(\bbp)$ is not Noetherian.

\begin{definition}
A commutative ring $R$ is called {\em Noetherian} if there is no infinite increasing sequence of ideals, that is, for every increasing sequence $I_1\subset I_2 \subset I_3 \subset \cdots$ of ideals of  $R$, there exists an $N\in \mN$ such that $I_n=I_N$ for all $n>N$.
A commutative ring $R$ is called {\em Artinian} if there is no infinite descending sequence of ideals, that is, for every decreasing sequence $I_1\supset I_2 \supset I_3 \supset \cdots$ of ideals of $R$, there exists an $N\in \mN$ such that $I_n=I_N$ for all $n>N$.  
\end{definition}

\vspace{-0.2cm}

\noindent $\calA(\bbp)$ is not Noetherian. (Let 
 $I_n=\{f\in \calA(\bbp): \widehat{f}(k)=0\textrm{ for all } k\geq n\}$ for all $n\in \mN$. Clearly $I_1\subset I_2\subset I_3\subset \cdots$. Moreover, each inclusion is strict, since if $f_n:=z^{n}\in \calA(\bbp)$, $n\in \mN$, then $f_n\in I_{n+1}\setminus I_{n}$.) 

\noindent $\calA(\bbp)$ is not Artinian. (Let 
 $
I_n=\{f\in \calA(\bbp): \widehat{f}(k)=0\textrm{ for all } k\leq n\}$ for all $n\in \mN$.  
Clearly $I_1\supset I_2\supset I_3\supset \cdots$. Moreover, each inclusion is strict, since if $f_n:=z^{n+1}\in \calA(\bbp)$, $n\in \mN$,  then  $f_n\in I_n\setminus I_{n+1}$.)

\vspace{-0.3cm}

\section{$\calA(\bbp)$ is a coherent ring}

\vspace{-0.1cm}

\noindent  In absence of the Noetherian `finiteness' property, a natural finiteness  question is that of coherence. We refer the reader to the article \cite{Gla} and the monograph \cite{Gla0} for the relevance of the property of
coherence in commutative holomogical algebra.

\begin{definition}
A commutative unital ring $R$ is called {\em coherent} if every 
finitely generated ideal $I$ is finitely presentable, that is, 
there exists an exact sequence
$0 \rightarrow K \rightarrow F  \rightarrow I \rightarrow 0$ of $R$-modules, 
where $F$ is a finitely generated free $R$-module and $K$ is a finitely generated
$R$-module.
\end{definition}

\vspace{-0.1cm}

\noindent All Noetherian rings are coherent, but not all coherent rings are Noetherian. (For example, the polynomial ring $\mC[x_1, x_2, x_3, \cdots ]$ is not Noetherian (because the sequence of ideals $\langle x_1 \rangle \subset  \langle x_1, x_2\rangle  \subset   \cdots$ is ascending and not stationary), but $\mC[x_1, x_2, x_3, \cdots ]$ is coherent \cite[Corollary~2.3.4]{Gla0}.)

A commutative ring in which every finitely generated ideal is principal is said to be {\em B\'ezout}. By property (R4) (p.~\pageref{3320231651}), $\calA(\bbp)$ is a B\'ezout ring. It is known that B\'ezout {\em domains} are coherent, but we cannot use this to conclude that $\calA(\bbp)$ is coherent, since $\calA(\bbp)$ is {\em not} a domain (as $\calA(\bbp)$ has nontrivial zero divisors: e.g. $z\ast z^2=0$). 

\begin{theorem}
$\calA(\bbp)$ is coherent.
\end{theorem}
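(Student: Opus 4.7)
The plan is to exploit the Bézout property (R4) to reduce coherence to showing that every principal ideal is finitely presented, and then realize the relevant kernel explicitly via a single generator supported on the zero set of the Taylor coefficients.

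First, I would observe that by (R4) every finitely generated ideal of $\calA(\bbp)$ is already principal, so it suffices to show that an arbitrary principal ideal $\langle g\rangle$ is finitely presented. The natural surjection of $\calA(\bbp)$-modules $\calA(\bbp)\twoheadrightarrow \langle g\rangle$, $f\mapsto f\ast g$, has kernel equal to the annihilator $\mathrm{Ann}(g)=\{f\in\calA(\bbp):f\ast g=0\}$. Because $\calA(\bbp)$ is itself free of rank one, the finite presentability of $\langle g\rangle$ is equivalent to $\mathrm{Ann}(g)$ being finitely generated as an ideal.

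Next, from the formula $(f\ast g)(z)=\sum_n \bbp(n)\widehat{f}(n)\widehat{g}(n)z^n$, one has $f\in\mathrm{Ann}(g)$ iff $\widehat{f}(n)=0$ for every $n$ in the set $Z:=\{n\in\mN_0:\widehat{g}(n)\neq 0\}$. I would then define the candidate generator $h\in\calA(\bbp)$ by
\[
\widehat{h}(n)=\begin{cases} \tfrac{1}{\bbp(n)} & \text{if } \widehat{g}(n)=0,\\ 0 & \text{if } \widehat{g}(n)\neq 0.\end{cases}
\]
The estimate $\bbp(n)|\widehat{h}(n)|\le 1$ ensures $h\in\calA(\bbp)$, and trivially $h\ast g=0$, so $h\in\mathrm{Ann}(g)$.

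Finally, I would verify $\mathrm{Ann}(g)=\langle h\rangle$ using the divisibility criterion (R1). One inclusion is immediate: any $f\in\langle h\rangle$ satisfies $|\widehat{f}(n)|\le C|\widehat{h}(n)|$, which vanishes off $Z^c$, so $f\ast g=0$. For the reverse inclusion, given $f\in\mathrm{Ann}(g)$, we have $\widehat{f}(n)=0$ for $n\in Z$, while for $n\notin Z$ the estimate $|\widehat{f}(n)|\le \|f\|/\bbp(n)=\|f\|\,|\widehat{h}(n)|$ holds; thus (R1) yields $h\mid f$ and hence $f\in\langle h\rangle$. This shows $\mathrm{Ann}(g)$ is principal, and so the exact sequence $0\to \langle h\rangle\to\calA(\bbp)\to\langle g\rangle\to 0$ exhibits $\langle g\rangle$ as finitely presented. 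I do not anticipate any serious obstacle here: the only non-formal ingredient is the pointwise construction of $h$, and the Bézout property does all the structural work. The proof relies entirely on (R1) and (R4) and on the explicit description of multiplication in $\calA(\bbp)$.
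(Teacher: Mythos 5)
Your proof is correct and takes essentially the same route as the paper: reduce to a principal ideal via (R4), construct the same indicator-type element (your $h$ is the paper's $\chi$) supported on the zero set of the Taylor coefficients of the generator, and use (R1) to identify the kernel of $f\mapsto f\ast g$ with $\langle h\rangle$. The only cosmetic difference is that you phrase the conclusion in terms of the annihilator being principal, whereas the paper writes out the short exact sequence explicitly.
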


\vspace{-0.5cm}

\begin{proof} Let $I$ be a finitely generated ideal in $\calA(\bbp)$. 
Then $I$ is principal by the property (R4) (p.~\pageref{3320231651}). 
 So there exists an $f_I\in \calA(\bbp)$ such that $I =\langle f_I \rangle$. 
 Define $\chi \in \calA(\bbp)$ by setting 
 $$
 \widehat{\chi}(k)=\left\{ \begin{array}{cl} 
 \frac{1}{\bbp(k)} & \textrm{if } \widehat{f_I}(k)=0,\\
 0 & \textrm{if } \widehat{f_I}(k)\neq 0.
 \end{array}
 \right.
 $$
 Define $K=\langle \chi \rangle$. Then $K$ is a finitely generated $\calA(\bbp)$-module.  Let  $F:=\calA(\bbp)=\langle \varepsilon\rangle$. Then $F$ is a finitely generated free module. 
 Consider the $\calA(\bbp)$-module morphism $\varphi: F\rightarrow I$ given by $\varphi(h)=f_I \ast h$ for all $h\in \calA(\bbp)$. We will show that the sequence 
 $ 0\rightarrow K\hookrightarrow F\rightarrow I \rightarrow 0$ is exact, where $K\hookrightarrow F$ denotes the inclusion map. 
The exactness at $K$ and at $I$ is clear. 
It remains to show 
$ \{h \in \calA(\bbp) : f_I \ast h= 0\} 
= K.
$ 
If $h\in K$, then $h=\chi \ast f$ for an $f\in \calA(\bbp)$. We have $\varphi(h)=f_I\ast( \chi \ast f)$. But 
$\reallywidehat{(f_I\ast \chi)}(k)=0$ for all $k\in \mN_0$, and so $f_I\ast \chi=0$, 
showing that $\varphi(h)=0$, that is, $h\in \ker \varphi$. Hence $K\subset \ker \varphi$. 

Now suppose $h\in \calA(\bbp)$ is such that $f_I\ast h=0$. As $h\in \calA(\bbp)$, 
there exists a $C>0$ such that for all $k\in \mN_0$, 
 $
|\widehat{h}(k)|\leq \frac{C}{\bbp(k)}.$ 
As $f_I\ast h=0$, we have that for all $k\in \mN_0$, 

\vspace{-0.75cm}

\begin{equation}
\label{3320231842}
\begin{array}{c}
0=\reallywidehat{(f_I\ast h)}(k)=\bbp(k) \widehat{f_I}(k) \widehat{h}(k).
\end{array}
\end{equation}
If $k\in \mN_0$ is such that $\widehat{f_I}(k)\neq 0$, then by the definition of $\chi$, we have $\widehat{\chi}(k)=0$, and moreover, then \eqref{3320231842} above implies that $\widehat{h}(k)=0$, so that  
\begin{equation}
\label{3320231846a}
|\widehat{h}(k)|=0=C\cdot 0= C\cdot |\widehat{\chi}(k)|.
\end{equation}
 If $k \in \mN_0$ is such that $\widehat{f_I}(k)=0$, then we have 
$\chi(k)=\frac{1}{\bbp(k)}$, and so 
\begin{equation}
\label{3320231846b}
\begin{array}{c}
|\widehat{h}(k)|\leq \frac{C}{\bbp(k)}=C\cdot \frac{1}{\bbp(k)}=C\cdot |\widehat{\chi}(k)|.
\end{array}
\end{equation}
\eqref{3320231846a} and \eqref{3320231846b} together imply that $|\widehat{h}(k)|\leq C|\widehat{\chi}(k)|$ for all $k\in \mN_0$, and so by the criterion (R1) (p.~\pageref{3320231845}),  $\chi$ divides $h$ in $\calA(\bbp)$, that is, there exists some $f\in \calA(\bbp)$ such that $h=\chi\ast f$, i.e., $h\in \langle \chi\rangle=K$, as wanted. 
\end{proof}

\vspace{-0.6cm}

\section{Generation of $\textrm{SL}_n(\calA(\bbp))$ by elementary matrices}

\vspace{-0.18cm}

\noindent 
Let $R$ be a commutative unital ring with multiplicative identity $1$ and additive identity element $0$. Let $m\in \mN$.  The {\em general linear group} of  invertible matrices  in $R^{m\times m}$ is denoted by $\textrm{GL}_m(R)$. The {\em special linear group} $\SL_m(R)$ is the subgroup of $\textrm{GL}_m(R)$ of 
  all matrices $M$  whose 
  determinant $\det M=1$. An {\em elementary matrix} 
$E_{ij}(\alpha)$ is a matrix having form $E_{ij}=I_m+\alpha \bbe_{ij}$, 
 where $i\neq j$, $\alpha \in R$, and $\bbe_{ij}$ is the $m\times m$ matrix whose entry in the $i^{\textrm{th}}$ row and $j^{\textrm{th}}$  column is $1$, and all the other entries of $\bbe_{ij}$ are zeros. $\E_m(R)$ is the subgroup of $\SL_m(R)$ generated by elementary matrices.  A classical question in algebra is: 
 $$
 \begin{array}{c}
 \textrm{For all }m\in \mN,\textrm{ is }\SL_m(R)=\E_m(R)\textrm{ ?}
 \end{array}
 $$
 
 \noindent  
 The answer to this question depends on the ring $R$. For
example, if the ring $R=\mC$, then the answer is `Yes', and this is
an exercise in linear algebra; see for example \cite[Exercise~18.(c),
page~71]{Art}. If $R$ is the polynomial ring
$\mC[z_1, \cdots, z_d]$ in the indeterminates $z_1, \cdots, z_d$ with
complex coefficients, then for $d=1$, the answer is `Yes' (this
follows from the Euclidean Division Algorithm in $\mC[z]$), but for 
$d=2$, the answer is `No', and \cite{Coh66} contains the
following example:
$$
\left[ \begin{smallmatrix} 1+z_1 z_2 & z_1^2 \\
        -z_2^2 & 1-z_1 z_2 
       \end{smallmatrix}\right] \in \SL_2(\mC[z_1,z_2]) \setminus \E_2(\mC[z_1,z_2]).
$$
(For $d\geq 3$, the answer is `Yes', and this is the $K_1$-analogue
of Serre's Conjecture, which is the Suslin Stability Theorem
\cite{Sus}.)  The case of $R$ being a ring of real/complex valued
continuous functions was considered in \cite{Vas}.  For the ring
$R=\mathcal{O}(X)$ of holomorphic functions on Stein spaces in
$\mC^d$, this was an explicit open
problem  \cite{Gro}, and was answered affirmatively in \cite{IvaKut}. 
We will prove below that
$\SL_n(\calA(\bbp))=\E_n(\calA(\bbp))$.

 For Banach algebras, the following result is known  \cite[\S7]{Mil}:

 \begin{proposition}
  \label{prop_11_may_2021_18:36}  
  Let $A$ be a complex commutative unital semisimple Banach algebra$,$ $n\in \mN,$ and $M\in \eSL_n(A)$. Then $M\in \eE_n (A)$ if and only if 
$M$ is path-connected to $I_n$ in $\eSL_n(A)$ $($i.e.$,$ there exists a continuous map $\gamma:[0,1]\rightarrow \eSL_n(A)$ such that 
$\gamma(0)=M$ and $\gamma(1)=I_n)$. 
 \end{proposition}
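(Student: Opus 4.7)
The plan is to derive both implications from the stronger topological assertion that $\eE_n(A)$ is a clopen subgroup of $\eSL_n(A)$. The ``only if'' direction is straightforward: given $M=E_{i_1 j_1}(\alpha_1)\cdots E_{i_k j_k}(\alpha_k)\in \eE_n(A)$, the curve
$$
\gamma(t):=E_{i_1 j_1}(t\alpha_1)\cdots E_{i_k j_k}(t\alpha_k)\quad (t\in[0,1])
$$
is continuous, takes values in $\eE_n(A)\subseteq \eSL_n(A)$, and joins $I_n$ to $M$. For the ``if'' direction, once $\eE_n(A)$ is known to be clopen in $\eSL_n(A)$, the path-component of $I_n$ must lie entirely inside $\eE_n(A)$, so any path from $I_n$ to $M$ in $\eSL_n(A)$ forces $M\in \eE_n(A)$.

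The core of the argument is therefore to produce an open neighbourhood $U$ of $I_n$ in $\eSL_n(A)$ that is contained in $\eE_n(A)$; then, by left-translation, $\eE_n(A)$ is open, and its cosets partition $\eSL_n(A)$ into open sets, making $\eE_n(A)$ closed as well. To construct such a $U$: for $N$ with $\|N-I_n\|$ small enough, the principal matrix logarithm $X:=\sum_{k\geq 1}(-1)^{k+1}(N-I_n)^k/k$ converges in $M_n(A)$ and satisfies $\exp X=N$. From $\det \exp X=\exp(\mathrm{tr}\,X)=1$, I conclude that for every character $\varphi$ of $A$, $\varphi(\mathrm{tr}\,X)\in 2\pi i \mZ$; by continuity in $N$ and proximity to $I_n$, this integer must be $0$, and semisimplicity of $A$ then yields $\mathrm{tr}\,X=0$. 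It remains to show $\exp X\in \eE_n(A)$ whenever $X$ has trace zero and small norm: decompose $X=X_{\mathrm{off}}+D$ with $X_{\mathrm{off}}$ off-diagonal and $D$ diagonal of trace zero; use a Trotter/Baker--Campbell--Hausdorff argument so that $\exp X$ becomes a convergent product of factors of the form $\exp(\alpha \bbe_{ij})=I_n+\alpha \bbe_{ij}=E_{ij}(\alpha)$ for $i\neq j$ (already elementary) and exponentials of diagonal trace-zero matrices, which are handled by Whitehead-type identities such as
$$
\mathrm{diag}(a,a^{-1})=E_{12}(a)\,E_{21}(-a^{-1})\,E_{12}(a)\cdot E_{12}(-1)\,E_{21}(1)\,E_{12}(-1)\qquad (a\in A^{-1}),
$$
extended diagonally in larger sizes.

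The main obstacle is the decomposition step: the Trotter product formula expresses $\exp X$ only as a limit of finite products of elementary matrices, and $\eE_n(A)$ is not a priori closed in $\eSL_n(A)$, so care is required to keep the limit inside $\eE_n(A)$. A clean way around this is a bootstrap: first treat $X_{\mathrm{off}}$ (where $\exp$ is truncated because $\bbe_{ij}^2=0$ for $i\neq j$, giving an exact factorisation into elementary matrices), then treat the diagonal trace-zero part via the Whitehead identities above (which are exact, not limiting), and finally use BCH truncated to finitely many terms modulo a remainder that is itself controllable as a product of factors already shown to lie in $\eE_n(A)$. The semisimplicity hypothesis is used only once, but crucially, to promote the character-wise condition $\varphi(\mathrm{tr}\,X)=0$ to the genuine equality $\mathrm{tr}\,X=0$ in $A$.
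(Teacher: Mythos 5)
The paper itself does not prove this proposition; it cites Milnor \cite{Mil} and moves on. Your proof attempt is therefore genuinely your own, and the global strategy you chose is the standard and correct one: show that $\E_n(A)$ is an open subgroup of $\SL_n(A)$, hence clopen (its cosets cover the complement), and conclude that the path component of $I_n$ is exactly $\E_n(A)$. The ``only if'' direction via $\gamma(t)=E_{i_1j_1}(t\alpha_1)\cdots E_{i_kj_k}(t\alpha_k)$ is correct. The appeal to semisimplicity to upgrade $\varphi(\operatorname{tr} X)\in 2\pi i\mZ$ to $\operatorname{tr} X=0$ is also sound in itself.

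However, there is a real gap in the step ``$\exp X\in \E_n(A)$ for $X$ of small norm and trace zero.'' The assertion that the exponential of $X_{\mathrm{off}}$ truncates because $\bbe_{ij}^2=0$ is false: that identity gives $\exp(\alpha\bbe_{ij})=I_n+\alpha\bbe_{ij}$ for a \emph{single} off-diagonal slot, but for a general off-diagonal matrix $X_{\mathrm{off}}=\sum_{i\neq j}\alpha_{ij}\bbe_{ij}$ one has, e.g.\ in the $2\times 2$ case, $X_{\mathrm{off}}^2=\operatorname{diag}(\alpha_{12}\alpha_{21},\alpha_{12}\alpha_{21})\neq 0$, so $\exp X_{\mathrm{off}}$ is a genuine infinite series and is not a priori a finite product of elementary matrices. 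The Trotter/BCH ``bootstrap'' you then propose does not close the gap, and you are candid about why: those formulas produce limits or infinite products, and $\E_n(A)$ is exactly what you have not yet shown to be closed, so you cannot pass to the limit. The remainder you describe as ``controllable as a product of factors already shown to lie in $\E_n(A)$'' is not exhibited, and there is no obvious way to exhibit it.

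The clean fix is to abandon the matrix logarithm altogether for this local step and argue by Gaussian elimination, which is what the cited Milnor reference effectively does. If $\|N-I_n\|$ is small and $N\in\SL_n(A)$, the entry $N_{11}$ lies within distance $<1$ of $1$ and is thus invertible; multiplying $N$ on the left and right by elementary matrices $E_{i1}(-N_{i1}N_{11}^{-1})$ and $E_{1j}(-N_{11}^{-1}N_{1j})$ clears the first row and column, leaving a block $\operatorname{diag}(N_{11},N')$ with $N'$ still close to $I_{n-1}$ (for $\|N-I_n\|$ small enough). Inductively one reaches a diagonal matrix $D=\operatorname{diag}(d_1,\dots,d_n)$ with $d_1\cdots d_n=\det N=1$, and $D$ factors as a product of matrices $\operatorname{diag}(1,\dots,a,a^{-1},\dots,1)$, each of which lies in $\E_n(A)$ by the Whitehead identity you quoted. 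This reaches $\exp X\in\E_n(A)$ without any use of BCH, and incidentally without any use of semisimplicity in this local step; semisimplicity is needed elsewhere in Milnor's more general (non-commutative) treatment, not in the neighbourhood argument.
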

 
 \noindent  
 If $(A,\|\cdot\|)$  is a commutative unital Banach algebra, then $A^{n\times n} $ is a complex algebra with the usual matrix operations. 
 Let $A^{n}$ be the normed space of all column vectors of size $n$ with entries from $A$, componentwise operations, and the Euclidean norm given by \eqref{2_3_2023_1449}.  For $M\in A^{n\times n}$,  the multiplication map, 
  $
 A^{n}\owns \bbv \mapsto M\bbv \in A^{n}, 
 $ 
 is 
 a continuous linear transformation, and we equip $A^{n\times n}$ with the induced operator norm, denoted by $\|\cdot\|$ again. Then $A^{n\times n}$ with this operator norm is a unital Banach algebra. Subsets of $A^{n\times n}$ are given the induced  topology.  
 
% \begin{lemma}
% \label{lemma_14_june_2021_10:24}
% Let $(A,\|\cdot\|)$  be a commutative unital Banach algebra$,$ and 
%  $M=[m_{ij}]\in A^{n\times n},$ where the entry in the $i^{\textrm{\em th}}$ row and $j^{\textrm{\em th}}$ column of $M$ is denoted by $m_{ij},$ $1\leq i,j\leq n$. Then 
% $
% \|M\|^2\leq  \sum_{i=1}^n \sum_{j=1}^n \|m_{ij}\|^2.
% $
% \end{lemma}
%   \begin{proof}  Let $\bbv\in A^{n}$ have components $v_1,\dots, v_n\in A$. 
%  Using the Cauchy-Schwarz inequality in $\mR^n$, we have 
% $$
% \!{\scaleobj{0.96}{
% \begin{array}{rcl}
% \|M\bbv \|_2^2= \sum\limits_{i=1}^n  
% \Big\|  \sum\limits_{j=1}^n m_{ij} v_j\Big\|^2
% \!\!\!\!\!&\leq&\!\!\!
%\sum\limits_{i=1}^n
% \Big(  \sum\limits_{j=1}^n \|m_{ij}v_j\|\Big)^2
%  \leq
%  \sum\limits_{i=1}^n
% \Big(  \sum\limits_{j=1}^n \|m_{ij}\|\|v_j\|\Big)^2
% \\
% \!\!\!\!\!&\leq&\!\!\!
%  \sum\limits_{i=1}^n\sum\limits_{j=1}^n \|m_{ij}\|^2
%  \sum\limits_{k=1}^n \|v_k\|^2
% \!=
% \sum\limits_{i=1}^n\sum\limits_{j=1}^n \|m_{ij}\|^2 \|\bbv\|_2^2.
% \end{array}}}
% $$
% As $\bbv\in A^{n}$ was arbitrary, $ \|M\|^2\leq 
%  \sum_{i=1}^n \sum_{j=1}^n \|m_{ij}\|^2$.
% \end{proof}

  We first show some auxiliary results which we will need in the special case when the Banach algebra is $\calA(\bbp)$. Let the operator norm on $M\in \mC^{n\times n}$ (when $\mC^n$ is equipped with the Euclidean norm $\nm\cdot\nm_2$, and $M$ is viewed as a map $\mC^n\owns v\mapsto Mv\in \mC^n$) be denoted by $\nm M\nm_{2,2}$. Let $\calO(\mC)$ denote the set of all entire functions. For a matrix $A\in \calO(\mC)^{n\times n}$, if  $a_{ij}\in \calO(\mC)$ denotes the entry in the $i^{\textrm{th}}$ row and $j^{\textrm{th}}$ column of $A$, and then $\widehat{A}(k)\in \mC^{m\times n}$ is the matrix whose entry in the $i^{\textrm{th}}$ row and $j^{\textrm{th}}$ column is $\widehat{a_{ij}}(k)$, $1\leq i \leq m$, $1\leq j\leq n$, $k\in \mN_0$, where 
  $$
  \begin{array}{c}
  a_{ij}(z)=\sum\limits_{k=0}^\infty \widehat{a_{ij}}(k)z^k\textrm{ for all }z\in \mC.
  \end{array}
  $$ 
  
  \vspace{-0.3cm}
 
 \begin{lemma} 
 \label{9_3_2023_940}
 $A\in \calA(\bbp)^{n\times n}$ if and only if $\sup_{k\in \mN_0} \bbp(k) \nm \widehat{A}(k)\nm_{2,2}<\infty$.  
 Moreover, then $\sup_{k\in \mN_0} \bbp(k) \nm \widehat{A}(k)\nm_{2,2}\leq n\|A\|$. 
 \end{lemma}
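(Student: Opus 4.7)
\textbf{Proposal for the proof of Lemma~\ref{9_3_2023_940}.} The plan is to prove both implications via two elementary matrix-norm inequalities, combined with a comparison of the operator norm on $\calA(\bbp)^{n\times n}$ with the $\calA(\bbp)$-norms of its entries.

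For the $(\Leftarrow)$ direction, I would use the basic bound $|M_{ij}|\leq \nm M\nm_{2,2}$ valid for every $M\in \mC^{n\times n}$ (each entry of $M$ being $\langle M e_j,e_i\rangle_2$). Applied to $\widehat{A}(k)$, this gives
$$
\bbp(k)|\widehat{a_{ij}}(k)|\leq \bbp(k)\nm \widehat{A}(k)\nm_{2,2}\quad (i,j\in \{1,\dots,n\},\;k\in \mN_0),
$$
and taking the supremum over $k$ yields $\|a_{ij}\|\leq \sup_{k\in \mN_0}\bbp(k)\nm\widehat{A}(k)\nm_{2,2}<\infty$, so each entry lies in $\calA(\bbp)$.

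For the $(\Rightarrow)$ direction and the quantitative bound, the first step is to show $\|a_{ij}\|\leq \|A\|$ for every entry, by testing $A$ against the unit vector $\bbv_j\in \calA(\bbp)^n$ with $\varepsilon$ in the $j^{\text{th}}$ slot and $0$ elsewhere. Since $\|\varepsilon\|=\sup_{k\in \mN_0}\bbp(k)\cdot \frac{1}{\bbp(k)}=1$, we have $\|\bbv_j\|_2=1$; and since $\varepsilon$ is the multiplicative identity of $\calA(\bbp)$, the $i^{\text{th}}$ component of $A\ast \bbv_j$ is exactly $a_{ij}$, giving
$$
\|A\|^2\geq \|A\ast \bbv_j\|_2^2=\sum_{i=1}^n \|a_{ij}\|^2\geq \|a_{ij}\|^2.
$$
The second step is the elementary estimate $\nm M\nm_{2,2}\leq n\max_{i,j}|M_{ij}|$, which follows from the Frobenius bound $\nm M\nm_{2,2}^2\leq \sum_{i,j}|M_{ij}|^2\leq n^2\max_{i,j}|M_{ij}|^2$. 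Combining both steps,
$$
\bbp(k)\nm \widehat{A}(k)\nm_{2,2}\leq n\max_{i,j}\bbp(k)|\widehat{a_{ij}}(k)|\leq n\max_{i,j}\|a_{ij}\|\leq n\|A\|,
$$
and taking the supremum over $k\in \mN_0$ gives the claimed inequality together with finiteness.

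No step is a genuine obstacle; the only subtlety is to correctly use that the operator norm $\|A\|$ on $\calA(\bbp)^{n\times n}$ is the one induced from the Euclidean norm~\eqref{2_3_2023_1449} on $\calA(\bbp)^n$, so that evaluating $A\ast\bbv_j$ on the natural unit vector $\bbv_j=\varepsilon\, e_j$ produces the entrywise inequality $\|a_{ij}\|\leq \|A\|$ that drives the argument.
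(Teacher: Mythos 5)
Your proof is correct and follows essentially the same approach as the paper: in both, the key step is testing $A$ against the unit vector $\bbv_j=\varepsilon\, e_j$ to obtain $\|a_{ij}\|\leq \|A\|$, and then passing between the operator norm $\nm\cdot\nm_{2,2}$ and entrywise bounds. The only cosmetic difference is that you derive the two elementary inequalities $\max_{i,j}|M_{ij}|\leq \nm M\nm_{2,2}\leq n\max_{i,j}|M_{ij}|$ directly (via inner products and the Frobenius norm), whereas the paper cites them from Bernstein's book.
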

 
 \vspace{-0.45cm}
 
 \begin{proof} (`Only if' part:) Suppose $A=[a_{ij}]\in \calA(\bbp)^{n\times n}$. Take the vector $\bbv \in \calA(\bbp)^{n\times 1}$, having only one nonzero entry, which is the $j^{\textrm{th}}$ component, and is equal to $\varepsilon$. Then we obtain 

\vspace{-0.81cm}

 $$
 \begin{array}{c}
 \|a_{ij}\|^2\leq \sum\limits_{i=1}^n \|a_{ij}\|^2 =\|A\bbv\|_2^2\leq \|A\|^2 \|\bbv\|_2^2 =\|A\|^21=\|A\|^2.
 \end{array}
 $$
   Hence $\sup_{k\in \mN_0} \bbp(k)|\widehat{a_{ij}}(k)| = \|a_{ij}\|\leq \|A\|$. As the $i,j$ were arbitrary, it follows from 
 \cite[Fact~9.8.10(xii)]{Ber} that $\sup_{k\in \mN_0} \bbp(k)\nm \widehat{A}(k)\nm_{2,2}\leq n \|A\|$. 
 
 \noindent (`If' part:) If $C:=\sup_{k\in \mN_0} \bbp(k) \nm \widehat{A}(k)\nm_{2,2}<\infty$, then \cite[Fact~9.8.10(xii)]{Ber} implies $\sup_{k\in \mN_0} \bbp(k) |\widehat{a_{ij}}(k)|\leq C$, and so $\|a_{ij}\|\leq C$ for all $1\leq i,j\leq n$. 
 Thus $A\in \calA(\bbp)^{n\times n}$. 
 \end{proof}
 
 \begin{lemma}
 If $A\in \calA(\bbp)^{n\times n}$ then $\nm A(z)\nm_{2,2} \leq n\|A\|\varepsilon(|z|)$ 
 $(z\in \mC)$. 
 \end{lemma}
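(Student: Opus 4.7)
The plan is to proceed by a direct triangle-inequality estimate on the matrix power series, using the previous lemma to control the operator norms of the Taylor coefficients.

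First I would write the matrix $A(z)$ as the convergent series
$$
A(z) = \sum_{k=0}^\infty \widehat{A}(k)\, z^k \qquad (z\in \mC),
$$
where convergence of each entry in $\mC$ is guaranteed by $a_{ij}\in \calA(\bbp)$ together with the superexponential growth of $\bbp(n)$. By the triangle inequality and submultiplicativity of $\nm\cdot\nm_{2,2}$ with scalars, we obtain
$$
\nm A(z)\nm_{2,2} \leq \sum_{k=0}^\infty \nm \widehat{A}(k)\nm_{2,2}\, |z|^k.
$$

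Next, I would invoke the previous lemma (Lemma~\ref{9_3_2023_940}): since $A\in \calA(\bbp)^{n\times n}$, we have $\bbp(k)\nm \widehat{A}(k)\nm_{2,2}\leq n\|A\|$ for every $k\in \mN_0$, i.e.\
$$
\nm \widehat{A}(k)\nm_{2,2}\leq \frac{n\|A\|}{\bbp(k)}.
$$
Substituting this bound into the previous display and factoring out the constant $n\|A\|$ gives
$$
\nm A(z)\nm_{2,2} \leq n\|A\| \sum_{k=0}^\infty \frac{|z|^k}{\bbp(k)} = n\|A\|\,\varepsilon(|z|),
$$
where the last equality uses the definition \eqref{identity_element} of $\varepsilon$.

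There is no real obstacle here; the only point to verify carefully is the termwise passage from the series for $A(z)$ to a bound in terms of $\nm \widehat{A}(k)\nm_{2,2}$, which is just the triangle inequality for the operator norm applied to a series of matrices that converges absolutely (in the operator norm) on all of $\mC$, as witnessed by the finite majorant $n\|A\|\,\varepsilon(|z|)$.
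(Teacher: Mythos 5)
Your proof is correct and follows essentially the same route as the paper's: expand $A(z)$ as its Taylor series, apply the triangle inequality for $\nm\cdot\nm_{2,2}$, and then invoke Lemma~\ref{9_3_2023_940} to bound $\nm\widehat{A}(k)\nm_{2,2}$ by $n\|A\|/\bbp(k)$, summing to $n\|A\|\varepsilon(|z|)$. The extra remarks about absolute convergence are fine but not needed beyond what the final majorant already shows.
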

 
  \vspace{-0.3cm}
  
 \begin{proof}
 For $z\in \mC$, we have 
 
 \vspace{-0.81cm}
 
$$
\phantom{AAA}
\begin{array}{rcl}
\nm A(z)\nm_{2,2} =\nm \sum\limits_{k=0}^\infty \widehat{A}(k) z^k\nm_{2,2} 
\!\!\!&\leq&\!\!\!  \sum\limits_{k=0}^\infty \nm \widehat{A}(k)\nm_{2,2} |z|^k 
\\[0.1cm]
\!\!\!&\leq&\!\!\! \sum\limits_{k=0}^\infty \frac{n\|A\|}{\bbp(k)} |z|^k
=n\|A\|\varepsilon(|z|).\;\!
\phantom{AAA}
\qedhere
\end{array} 
$$
\end{proof}

 \noindent To apply Proposition~\ref{prop_11_may_2021_18:36}, we will need the following result.

\begin{theorem}
\label{8_3_2023_1047}
Let $A\in \eGL_n(\calA(\bbp))$. Then there exists a $B\in \calA(\bbp)^{n\times n}$ such that $e^B=A$.
\end{theorem}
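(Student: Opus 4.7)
The plan is to turn the matrix-logarithm problem into a uniform bound on matrix logarithms in $\mC^{n\times n}$, coefficient by coefficient. Let $M_k := \bbp(k)\widehat{A}(k)$ and, for any $B\in \calA(\bbp)^{n\times n}$, $N_k := \bbp(k)\widehat{B}(k)$. Repeating the interchange-of-sums argument of Lemma~\ref{7_3_2023_1748} in the Banach algebra $\calA(\bbp)^{n\times n}$ (the required absolute convergence follows from the submultiplicativity $\|B^{\ast m}\|\leq \|B\|^m$ together with the series defining $\varepsilon$) gives the matrix identity
\[
\widehat{e^B}(k) \;=\; \frac{1}{\bbp(k)}\exp\!\bigl(\bbp(k)\widehat{B}(k)\bigr) \;=\; \frac{1}{\bbp(k)}\exp(N_k) \quad (k\in \mN_0).
\]
Hence $e^B=A$ amounts to the countable family of equations $\exp(N_k) = M_k$ in $\mC^{n\times n}$.

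Since $A\in \calA(\bbp)^{n\times n}$, Lemma~\ref{9_3_2023_940} gives $\sup_k\nm M_k\nm_{2,2}<\infty$. The hypothesis $A\in \eGL_n(\calA(\bbp))$ furnishes $A^{-1}\in \calA(\bbp)^{n\times n}$; comparing Taylor coefficients of $A\ast A^{-1}=I_n$ forces each $\widehat{A}(k)$ to be invertible with $M_k^{-1}= \bbp(k)\widehat{A^{-1}}(k)$, and another application of Lemma~\ref{9_3_2023_940} gives $\sup_k\nm M_k^{-1}\nm_{2,2}<\infty$. Consequently the sequence $(M_k)_{k\in \mN_0}$ lies in the compact set
\[
D_R \;:=\;\bigl\{M\in \mC^{n\times n}:\nm M\nm_{2,2}\leq R,\; \nm M^{-1}\nm_{2,2}\leq R\bigr\}\;\subset\; \eGL_n(\mC)
\]
for a suitable finite $R$.

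The main step is then the following uniform logarithm lemma, which I expect to be the hard part: there is a constant $C=C(R,n)<\infty$ such that every $M\in D_R$ admits some $N\in \mC^{n\times n}$ with $\exp N=M$ and $\nm N\nm_{2,2}\leq C$. Given $M_0\in D_R$, choose a ray $\rho_{M_0}$ from the origin disjoint from the finite spectrum $\sigma(M_0)$; the set of $M\in \eGL_n(\mC)$ whose spectrum is disjoint from $\rho_{M_0}$ is open and contains $M_0$. On this neighborhood, the holomorphic functional calculus
\[
\ell_{M_0}(M)\;:=\;\frac{1}{2\pi i}\int_{\Gamma_{M_0}} \log_{\rho_{M_0}}\!(\zeta)\,(\zeta I - M)^{-1}\,d\zeta,
\]
using the branch of $\log$ cut along $\rho_{M_0}$ and a contour $\Gamma_{M_0}\subset \mC\setminus \rho_{M_0}$ enclosing $\sigma(M)$, defines a continuous (indeed holomorphic) matrix logarithm satisfying $\exp\circ \ell_{M_0}=\textrm{id}$. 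Compactness of $D_R$ extracts a finite subcover with associated log maps $\ell_1,\dots,\ell_p$, each continuous on the compact closure of its patch intersected with $D_R$, and taking $C$ as the maximum of the finitely many bounds completes the lemma.

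With this in hand, pick $N_k\in \mC^{n\times n}$ with $\exp(N_k)=M_k$ and $\nm N_k\nm_{2,2}\leq C$, and set $\widehat{B}(k):= N_k/\bbp(k)$. Then $\sup_{k\in \mN_0} \bbp(k)\nm \widehat{B}(k)\nm_{2,2}\leq C$, so Lemma~\ref{9_3_2023_940} places $B$ in $\calA(\bbp)^{n\times n}$, and $\widehat{e^B}(k)= \exp(N_k)/\bbp(k)= M_k/\bbp(k)= \widehat{A}(k)$ for every $k$, giving $e^B=A$. The only non-routine ingredient is the uniform logarithm lemma; the rest is essentially the matrix-valued version of Lemma~\ref{7_3_2023_1748} together with Lemma~\ref{9_3_2023_940}.
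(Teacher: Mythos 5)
Your proposal is correct, and it takes a genuinely different route to the uniform bound on the coefficient-wise matrix logarithms, which is the crux of the theorem. Both you and the paper reduce the problem to finding logarithms $N_k$ of $M_k:=\bbp(k)\widehat{A}(k)$ with $\sup_k\nm N_k\nm_{2,2}<\infty$, and both extract the uniform control from the fact that $A,A^{-1}\in\calA(\bbp)^{n\times n}$ via Lemma~\ref{9_3_2023_940}. The divergence is in how the bound is obtained. The paper fixes the spectra of the $M_k$ in a common annulus $r\le|\lambda|\le R$, builds a keyhole contour of uniformly bounded length and uniformly bounded distance to the spectra (with a $k$-dependent cut of angular width $2\pi/n$, chosen by pigeonhole), and then plugs into an explicit resolvent estimate due to Bandtlow to get a concrete $\widetilde K$ with $\nm\log_{(k)}(M_k)\nm_{2,2}\le\widetilde K$. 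You instead observe that the $M_k$ live in the compact set $D_R$, that $\GL_n(\mC)$ admits local continuous (holomorphic) logarithms via the same functional-calculus formula, and invoke a finite subcover to get a finite (but non-explicit) bound $C(R,n)$. Your version is shorter and avoids citing a resolvent estimate; the paper's version is quantitative. One small imprecision in your write-up: the map $\ell_{M_0}$ need not extend continuously to the closure of its full domain $\{M:\sigma(M)\cap\rho_{M_0}=\emptyset\}$ (the boundary may contain matrices whose spectrum meets the cut), so instead of taking ``the compact closure of its patch intersected with $D_R$'' you should shrink each patch to a closed ball $\overline{B(M_0,\epsilon_{M_0})}$ strictly inside the domain, cover $D_R$ by the corresponding open balls, and take a finite subcover; this yields the desired bound with no further issues.
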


 \vspace{-0.3cm}
 
\noindent Before proving this result, let us see how this gives the following. 

\vspace{-0.12cm}

\begin{theorem}
For all $n\in \mN,$ $\eSL_n(\calA(\bbp))=\eE_n(\calA(\bbp))$. 
\end{theorem}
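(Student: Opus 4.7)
The plan is to apply Milnor's criterion (Proposition~\ref{prop_11_may_2021_18:36}): the algebra $\calA(\bbp)$ is semisimple, since if $f$ belongs to every maximal ideal then in particular $\varphi_k(f)=\widehat{f}(k)=0$ for all $k\in \mN_0$, forcing $f=0$. Hence for each $M\in \SL_n(\calA(\bbp))$ it suffices to construct a continuous path in $\SL_n(\calA(\bbp))$ from $I_n$ to $M$; the reverse inclusion $\E_n\subset\SL_n$ is immediate since elementary matrices have determinant $\varepsilon$.

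By Theorem~\ref{8_3_2023_1047} there exists $B\in\calA(\bbp)^{n\times n}$ with $e^B=M$. Set $c:=\mathrm{tr}(B)\in\calA(\bbp)$. The formula $\det e^X = e^{\mathrm{tr}(X)}$ is a universal formal power-series identity in the entries of $X$ with rational coefficients --- verified classically over $\mC$ via triangularisation and passing to $\calA(\bbp)^{n\times n}$ by absolute convergence of both sides in the Banach algebra. Applied at $X=B$ this yields $e^c=\det M=\varepsilon$, and hence $e^{-c}=\varepsilon$ as well.

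The obvious homotopy $t\mapsto e^{tB}$ joins $I_n$ to $M$ inside $\GL_n(\calA(\bbp))$, but its determinant $e^{tc}$ is generally not $\varepsilon$ at intermediate $t$, so this path does not lie in $\SL_n$. The fix is to absorb the scalar discrepancy into the first diagonal entry of a compensating matrix: define
$$
\gamma(t)\;:=\;e^{tB}\ast \mathrm{diag}\bigl(e^{-tc},\varepsilon,\ldots,\varepsilon\bigr)\qquad (t\in [0,1]).
$$
Uniform convergence of the exponential series on $[0,1]$ makes $\gamma$ continuous; the endpoints are $\gamma(0)=I_n$ and $\gamma(1)=M\ast I_n=M$ (using $e^{-c}=\varepsilon$); and
$$
\det\gamma(t)\;=\;\det(e^{tB})\ast e^{-tc}\;=\;e^{tc}\ast e^{-tc}\;=\;\varepsilon,
$$
so $\gamma([0,1])\subset \SL_n(\calA(\bbp))$. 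Proposition~\ref{prop_11_may_2021_18:36} then delivers $M\in \E_n(\calA(\bbp))$, completing the argument.

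The key subtle point I would flag is precisely the failure of the naive homotopy $e^{tB}$ to stay in $\SL_n$. One might first try to replace $B$ by the traceless matrix $B-\tfrac{1}{n}cI_n$, but this alters the exponential by a factor of $e^{-c/n}$, which need not equal $\varepsilon$ since $c/n$ is in general not a logarithm of $\varepsilon$ even though $c$ is. The scalar-correction built into $\gamma$ sidesteps this obstruction by keeping the path inside $\SL_n$ at every $t$, rather than attempting to modify $B$.
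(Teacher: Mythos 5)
Your proof is correct and follows essentially the same route as the paper: take a logarithm $B$ with $e^B=M$ via Theorem~\ref{8_3_2023_1047}, form the naive path $t\mapsto e^{tB}$, and repair the determinant by a scalar correction in one column so that the path stays inside $\SL_n(\calA(\bbp))$, then invoke Proposition~\ref{prop_11_may_2021_18:36}. The only cosmetic difference is that you compute the correction factor as $e^{-tc}$ with $c=\mathrm{tr}(B)$ via the identity $\det e^X=e^{\mathrm{tr}(X)}$, whereas the paper simply scales the first column of $e^{(1-t)B}$ by $\det(e^{-(1-t)B})$, which avoids invoking that identity (one only needs $\det(e^{sB})\ast\det(e^{-sB})=\det(I_n)=\varepsilon$); you also helpfully make explicit the semisimplicity of $\calA(\bbp)$ and the easy inclusion $\E_n\subset\SL_n$, which the paper leaves implicit.
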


 \vspace{-0.45cm}
 
\begin{proof} Let $A\in \SL_n(\calA(\bbp))\subset \GL_n(\calA(\bbp))$. Let $B\in \calA(\bbp)^{n\times n}$ be such that $e^B=A$. For $t\in [0,1]$, define $\gamma(t)$ to be the matrix obtained by scaling any one column, say the first one, of $e^{(1-t)B}$, by $\det (e^{-(1-t)B})$. Then $\det (\gamma(t))=\varepsilon$, and so $\gamma(t)\in \SL_n(\calA(\bbp))$ for all $t\in [0,1]$. We have
$$
\gamma(0)=A\textrm{ and }\gamma(1)=e^0=I_n=\left[\begin{smallmatrix} \varepsilon &&\\ & \ddots & \\ && \varepsilon\end{smallmatrix}\right].
$$
 Moreover, as $\gamma$ is continuous, it follows that $A$ is path connected to $I_n$ in $\SL_n(\calA(\bbp))$. From Proposition~\ref{prop_11_may_2021_18:36}, we get $A\in \E_n(\calA(\bbp))$. Thus $\SL_n(\calA(\bbp))=\E_n(\calA(\bbp))$. 
\end{proof}

\noindent Recall that $A\in \GL_n(\mC)$ possesses a logarithm, which can be obtained as follows (see e.g. \cite[Example~5.20]{Kat}). We will be a bit more particular about the construction of the logarithm, since we will apply this to each  $\widehat{A}(k)$, $k\in \mN_0$, for our $A\in \SL_n(\calA(\bbp))$, and we will then need a uniform estimate on $\log (\bbp(k) \widehat{A}(k))$, $k\in \mN_0$. 

Denote the spectrum (the set of eigenvalues) of $A$ by $\sigma(A)$. There exists an open sector $\Omega_\theta=\{z\in \mC\setminus \{0\}: |\textrm{arg}\;\!z-\theta|<\frac{\pi}{n}\}$ of angular width $\frac{2\pi}{n}$ that does not intersect the spectrum of $A$. Moreover,  we have  $0<r:=\min_{\lambda \in \sigma (A)} |\lambda| \leq R:=\max_{\lambda \in \sigma (A)} |\lambda|$. Let $\gamma$ be the path 
$C_{R+1}+S_1+C_{r/2}+S_2$ (see the following picture), where $C_{r/2}$ is a circular arc of radius $r/2$ centred at $0$ traversed in the clockwise direction, $C_{R+1}$ is a circular arc of radius $R+1$ centred at $0$ traversed in the anticlockwise direction,  $S_1$ is a radial straight line segment joining the arc $C_{R+1}$ to the arc $C_{r/2}$ with the fixed argument $\theta-\frac{\pi}{2n}$, and $S_2$ is a radial straight line segment joining the arc $C_{r/2}$ to the arc $C_{R+1}$ with the fixed argument $\theta+\frac{\pi}{2n}$.

\vspace{-0.6cm}

\begin{figure}[H]
      \center
      \psfrag{s}[c][c]{${\scaleobj{0.69}{S_1}}$}
      \psfrag{S}[c][c]{${\scaleobj{0.69}{S_2}}$}
      \psfrag{c}[c][c]{${\scaleobj{0.69}{C_{r/2}}}$}
      \psfrag{C}[c][c]{${\scaleobj{0.69}{C_{R+1}}}$}
      \psfrag{L}[c][c]{${\scaleobj{0.69}{\sigma(A)}}$}
      \psfrag{T}[c][c]{${\scaleobj{0.69}{
      \begin{array}{ll}\textrm{The spectrum $\sigma(A)$  is contained in the shaded region.}\\\textrm{The curve }\gamma=C_{R+1}+S_1+C_{r/2}+S_2\textrm{ encloses }\sigma(A).\end{array}}}$}
      \includegraphics[width=5.4cm]{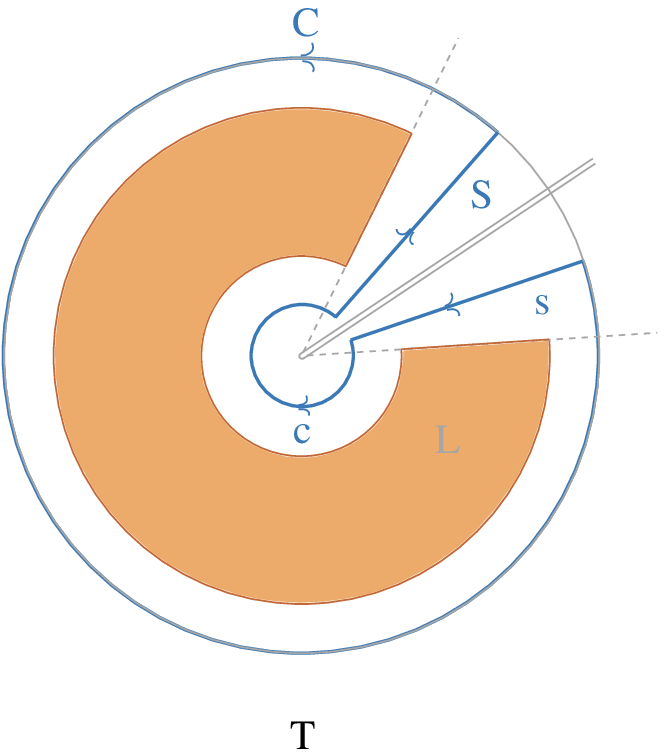}
       
\end{figure}

\noindent  If $\textrm{log}$ denotes the logarithm branch with a cut along the radial ray with fixed argument $\theta$, then we have 
$$
\begin{array}{c}
\log A=\frac{1}{2\pi i}\int_\gamma (\log \zeta)(\zeta I_n-A)^{-1} d\zeta.
\end{array}
$$
We will also use the following estimate\footnote{This follows by setting $p=2$ in the estimate given in \cite[Theorem~4.1]{Ban}, and noting that $\nu_2(A)^2$ given there is bounded by $\|A\|_{\textrm{F}}^2-\sum_{k=1}^n|\lambda_k|^2$, where $\|A\|_{\textrm{F}}$ denotes the Hilbert-Schmidt/Frobenius norm of $A$ and $\lambda_1,\cdots, 
\lambda_n$ denote the $n$ eigenvalues of $A$ repeated with multiplicities. We have $\|A\|_{\textrm{F}}\leq \sqrt{n} \nm A\nm_{2,2}$ (see e.g. \cite[Fact~9.8.10(ix) and Prop. 9.4.7]{Ber}) and $|\lambda_k|\leq \nm A\nm_{2,2}$ for all $k$ \cite[Corollary~9.4.5]{Ber}.} \cite[Theorem~4.1]{Ban} for the norm of the resolvent of $A\in \GL_n(\mC)$ at $z\in \mC\setminus \sigma(A)$: 
\begin{equation}
\label{resolvent_estimate}
\nm (zI_n-A)^{-1}\nm_{2,2} 
\leq \frac{1}{d(z,\sigma(A))} \exp \Big( c_2 \frac{2n \nm A\nm_{2,2}^2}{(d(z,\sigma(A)))^2} +b_2\Big)
\end{equation}
for some universal (not depending on $A$ or $z$) constants $c_2,b_2>0$. Here $d(z,\sigma(A)):=\inf\{|z-\lambda|:\lambda\in \sigma(A)\}$.

 \begin{proof}[Proof of Theorem~\ref{8_3_2023_1047}]  
 Let $A\in \GL_n(\calA(\bbp))$. Then 
 $$
 \begin{array}{c}
 \bbp(k)\widehat{A^{-1}}(k)\widehat{A}(k)=\left[\begin{smallmatrix} \frac{1}{\bbp(k)} &&\\ & \ddots & \\ && \frac{1}{\bbp(k)}\end{smallmatrix}\right].
 \end{array}
 $$
 If $v\in \mC^n\setminus\{0\}$ is an eigenvector of $\widehat{A}(k)$ of unit norm corresponding to the eigenvalue $\widetilde{\lambda}$, and $C>0$ is such that $\nm \widehat{A^{-1}}(k) \nm_{2,2}\leq \frac{C}{\bbp(k)}$, then the above yields upon operation on $v$ that 
 $$
 \begin{array}{c}
 \bbp(k)\frac{C}{\bbp(k)} |\widetilde{\lambda}|\geq \nm \bbp(k)\widehat{A^{-1}}(k)(\widetilde{\lambda} v)\nm_2 
 =\nm \frac{1}{\bbp(k)} v\nm_2=\frac{1}{\bbp(k)},
 \end{array}
 $$
 so that $\min\{|\lambda|:\lambda \in  \sigma(\bbp(k)\widehat{A}(k))\}\geq \frac{1}{C}=:r>0$ for all $k\in\mN_0$. Also, if $R>0$ is such that 
 $\nm \widehat{A}(k) \nm_{2,2}\leq \frac{R}{\bbp(k)}$, then since the spectral radius is bounded by the operator norm,  
 $$
 \begin{array}{c}
 \max\{|\lambda|:\lambda \in  \sigma(\bbp(k)\widehat{A}(k))\}\leq \nm \bbp(k)\widehat{A}(k)\nm_{2,2} \leq R \textrm{ for all } k\in \mN_0.
 \end{array}
 $$
 Let $\Omega_k$ denote an open sector of angular width $\frac{2\pi}{n}$ that does not intersect the spectrum of $\bbp(k)\widehat{A}(k)$. 
  Since $r, R$ do not depend on $k\in \mN_0$, and since the angular wedge width (of $\frac{2\pi}{n}$) also does not depend on $k\in \mN_0$, it is now clear that for any $\zeta$ lying on the image of $\gamma=C_{R+1}+S_1+C_{r/2}+S_2$ (as in the picture above), we have that $|\log_{(k)} \zeta|\leq \widetilde{C}$ for some constant independent of $k\in \mN_0$. (Here we use the notation $\log_{(k)} $ to emphasise the dependence of the chosen branch of the logarithm on the $k$ at hand.) Also the length of $\gamma$ can be bounded by 
  $$
  \begin{array}{c}
  L:=2\pi \frac{r}{2}+2\pi (R+1)+2((R+1)-\frac{r}{2}).
  \end{array}
  $$
   We have 
$$
 \begin{array}{rcl}
 \nm \log_{(k)} (\bbp(k) \widehat{A}(k))\nm_{2,2}
 \!\!\!&=&\!\!\!
 \nm \frac{1}{2\pi i}\int_\gamma (\log \zeta)(\zeta I_n-\bbp(k)\widehat{A}(k))^{-1} d\zeta\nm_{2,2}
 \\[0.2cm]
 \!\!\!&\leq &\!\!\! \frac{L}{2\pi} \widetilde{C} \max_{\zeta \in \gamma} \nm (\zeta I_n-\bbp(k)\widehat{A}(k))^{-1}\nm_{2,2}.
 \end{array}
$$
To bound the final right-hand term involving the resolvent, we will use the estimate \eqref{resolvent_estimate}. First we note that if $\zeta $ lies on $\gamma$, then\footnote{This lower bound is obtained by dropping a perpendicular from the corner of the shaded region onto $S_1$, which has a length $r\sin \frac{\pi}{2n}$, and the distance between $C_{r/2}$ and the shaded region is clearly $r/2$.} 
$$
\begin{array}{c}
d(\zeta, \sigma(\bbp(k)\widehat{A}(k)))\geq \min \{r\sin \frac{\pi}{4n}, \frac{r}{2}\}=:\delta>0.
\end{array}
$$ 
  Also, $\bbp(k) \nm \widehat{A}(k)\nm_{2,2}\leq R$ for all $k\in \mN_0$. Thus 
$$
\begin{array}{c}
\max_{\zeta \in \gamma} \nm (\zeta-\bbp(k)\widehat{A}(k))^{-1}\nm_{2,2}
\leq \frac{1}{\delta}\exp (c_2 \frac{2nR^2}{\delta^2}+b_2)=:K.
\end{array}
$$
This yields $\nm \log_{(k)} (\bbp(k) \widehat{A}(k))\nm_{2,2}\leq \frac{L}{2\pi} \widetilde{C} K=:\widetilde{K}$ for all $k\in \mN_0$. Now define 
$\widehat{B}(k)=\frac{1}{\bbp(k)}\log_{(k)} (\bbp(k) \widehat{A}(k))\in \mC^{n\times n}$ for all $k\in \mN_0$. Then 
$$
\begin{array}{c}
\nm \bbp(k)\widehat{B}(k)\nm_{2,2} \leq \widetilde{K}\textrm{ for all }k\in \mN_0, 
\end{array}
$$
and so by Lemma~\ref{9_3_2023_940}
$$
\begin{array}{c}
B(z):=\sum\limits_{k=0}^\infty \widehat{B}(k)z^k \quad (z\in \mC)
\end{array}
$$
 is an element in $\calA(\bbp)^{n\times n}$. We have 
 $$
\begin{array}{c}B^m(z)=\sum\limits_{k=0}^\infty \bbp(k)^{m-1} (\widehat{B}(k))^m z^k,
\end{array}
$$
 and 
thus\footnote{The exchange of the two summations is justified, 
since: 

$
{\scaleobj{0.96}{\sum\limits_{m=0}^\infty \sum\limits_{k=0}^\infty 
\!\nm \frac{\bbp(k)^{m-1} (\widehat{B}(k))^m}{m!}z^k\nm_{{\scaleobj{0.6}{2,2}}}
\!\!\leq\! 
\sum\limits_{m=0}^\infty \sum\limits_{k=0}^\infty 
\frac{|z|^k}{\bbp(k)} \frac{(n\| B\|)^m}{m!}
\!=\!\sum\limits_{k=0}^\infty \frac{|z|^k}{\bbp(k)} e^{n\|B\|} 
\!=\!e^{n\|B\|} \varepsilon(|z|)\!<\!\infty.}}
$}
$$
\begin{array}{rcl}
(e^B)(z)\!\!\!&=&\!\!\!\sum\limits_{m=0}^\infty \frac{(B^m)(z)}{m!} 
=\sum\limits_{m=0}^\infty\sum\limits_{k=0}^\infty 
\frac{\bbp(k)^{m-1}(\widehat{B}(k))^m }{m!} z^k
\\
\!\!\!&=&\!\!\!
\sum\limits_{k=0}^\infty \frac{z^k}{\bbp(k)} 
\sum\limits_{m=0}^\infty \frac{(\bbp(k)\widehat{B}(k))^m}{m!}
=\sum\limits_{k=0}^\infty \frac{z^k}{\bbp(k)}  e^{\bbp(k) \widehat{B}(k)}
\\
\!\!\!&=&\!\!\!\sum\limits_{k=0}^\infty \frac{z^k}{\bbp(k)} \bbp(k) \widehat{A}(k)
=A(z),
\end{array}
$$
as wanted.
\end{proof}

\end{document}